\numberwithin{equation}{section}
\theoremstyle{plain}
\newtheorem{theorem}{Theorem}[section]
\newtheorem{proposition}[theorem]{Proposition}
\newtheorem{lemma}[theorem]{Lemma}
\theoremstyle{remark}
\newtheorem{remark}[theorem]{Remark}
\theoremstyle{definition}
\newtheorem{definition}[theorem]{Definition}
\newcommand{\R}{\mathbb{R}}
\newcommand{\C}{\mathbb{C}}
\newcommand{\N}{\mathbb{N}}
\renewcommand{\ge}{\geqslant}
\renewcommand{\geq}{\geqslant}
\renewcommand{\leq}{\leqslant}
\DeclareMathOperator{\udimm}{\overline{dim}_M}
\DeclareMathOperator{\dimh}{dim_H}
\DeclareMathOperator{\diam}{diam}
\newcommand{\old}[1]{}
\author[P. Nissinen]{Petteri Nissinen}
\address{Department of Physics and Mathematics, University of Eastern Finland, P.O. Box 111, 80101 Joensuu, Finland}
\email{petteri.nissinen@uef.fi}
\author[I. Prause]{Istv\'an Prause}
\address{Department of Physics and Mathematics, University of Eastern Finland, P.O. Box 111, 80101 Joensuu, Finland}
\email{istvan.prause@uef.fi}
\thanks{The work was supported by the Finnish Academy project 355839.}
\title[Quasiconformal dimension distortion on a line]{On quasiconformal dimension distortion for subsets of the real line}
\begin{document}

\begin{abstract}
Optimal quasiconformal dimension distortions bounds for subsets of the complex plane have been established by Astala. We show that these estimates can be improved when one considers subsets of the real line of arbitrary Hausdorff dimension. We present some explicit numerical bounds. 
\end{abstract}

\maketitle

\smallskip

\noindent {\it Keywords:} quasiconformal mappings, Hausdorff dimension

\noindent {\it MSC2020 Classification:} Primary 30C62

\section{Introduction}

Let $f: \C \to \C$ be a $K$-quasiconformal mapping. A classical result is that, such maps are H\"older continuous with exponent $1/K$, see e.g. \cite[Theorem 3.10.2]{AIM}. Much deeper lies their distortion properties in terms of Hausdorff dimension. These have been proved by Astala in \cite{Astala}. Namely, if $E \subset \C$ such that $\dimh(E)>0$, then he showed the dimension distortion bounds
\begin{align}
\label{Astalasestimate}
\frac{1}{\frac{1}{K}\left(\frac{1}{\dimh(E)}-\frac{1}{2}\right)+\frac{1}{2}}\geq \dimh(f(E))\geq \frac{1}{K\left(\frac{1}{\dimh(E)}-\frac{1}{2}\right)+\frac{1}{2}}.
\end{align}
Ransford obtained the same kind of bounds in the context of variation of Julia sets in a holomorphic motion in \cite{Ransford}. Other properties of quasiconformal maps, such as the higher Sobolev regularity and area distortion are also closely related, see \cite{Astala}. 

Astala also showed in \cite{Astala} that the bounds are sharp in \eqref{Astalasestimate}. See also Theorem 1.7 of \cite{FRY} for a recent related result in the setting of holomorphic motions. These examples for sharpness are based on Cantor sets spread out in the plane and it is natural to expect that e.g. for subsets of a line the distortion is more constrained. 

Indeed, if one asks about dimension bounds of the entire $f(\R)$, Smirnov proved that the dimension of a $K$-quasicircle is at most $1+k^2$ in \cite{Smi}, which Ivrii further improved in \cite{Ivrii}. Here and throughout the paper $0\leq k<1$ and $K \ge 1$ always satisfy the relation $k=\frac{K-1}{K+1}$. Dimension distortion bounds for \emph{subsets} $E \subset \R$ have been studied, for instance, in \cite{ACMOU,HPS,Pr07,PrSm}. In these works, either only $1$-dimensional sets are considered (or at least the dimension being close to one) or as in \cite{PrSm} it is assumed that the map preserves the real line, $f(\R)=\R$.

In this paper, we consider quasiconformal maps $f \colon \C \to \C$ with no further symmetry assumptions and subsets $E \subset \R$ of \emph{arbitrary} Hausdorff dimension. Our goal is to show that in this setting, that is under the assumption $E \subset \R$, the estimates in \eqref{Astalasestimate} are not sharp, i.e. can be improved upon. We establish explicit bounds in Theorem \ref{smallerthanone1} (lower bound) and Theorem \ref{smallerthanone2} (upper bound) below. Note that in the special case $\dimh (E)=1$ we have the clean bounds of \cite[Theorem 3]{HPS} and \cite[Theorem 1]{Smi}
\[ 1-k^2 \leq \dimh (f(E)) \leq 1+k^2.\]
Unfortunately, the bounds in the general case $0<\dimh (E)<1$ in Theorems \ref{smallerthanone1} and \ref{smallerthanone2} are not as clean and provide only a weaker improvement.

Our method is based on decomposition properties of quasiconformal maps, this leads us to consider antisymmetric maps and symmetric ones. Ideas of \cite{Smi} leads to dimension distortion bounds in the antisymmetric case. We treat this case in detail in Section \ref{se:antisymmetric}.
Fuhrer, Ransford and Younsi very recently streamlined and unified quasiconformal dimension distortion results via inf-harmonic functions \cite{FRY}. We are able to make use of their framework in certain parts of the argument, see Theorem \ref{upward} and Theorem \ref{thm:Minkowskilower}. We prove the lower bound for Hausdorff dimension distortion in the antisymmetric case in Theorem \ref{downward}. The approach here is based on \cite[Ch. 13]{AIM}. 
In the symmetric case, we can directly use results of \cite{PrSm}. The two type of estimates are combined and further amplified via yet another decomposition in Section \ref{se:improved}. Necessary background and definitions are recalled in Section \ref{se:pre}.

\section{Preliminaries} \label{se:pre}
In this paper we are only concerned with subsets $E$ of the complex plane $\C$. We refer to the standard definitions of Hausdorff outer measure and \emph{Hausdorff and Minkowski dimension}, for example to the book \cite[pp. 19-20, p.25,  pp. 28-29]{Falconer0}.
It will be convenient for us to consider an equivalent definition for Hausdorff dimension (denoted below by $\dimh$) in terms of covering with balls which we review next briefly. We take the following infimum 
\begin{align}
{\mathcal{B}}_{\delta}^s(E)=\inf\left\{\sum_{i=1}^{\infty} {\left(\diam B_i\right)}^s: \{B_i\} \ \textrm{is a} \ \delta\textrm{-cover of $E$ by balls}\right\}.
\end{align}
We use the notation $\diam$ for diameter of a set and by $\delta$-cover we require that $\diam B_i \leqslant \delta$.
We obtain an outer measure 
\begin{align}
{\mathcal{B}}^s(E)=\lim_{\delta \downarrow 0}{\mathcal{B}}_{\delta}^s(E)=\sup_{\delta>0} {\mathcal{B}}_{\delta}^s(E).
\end{align}

By \cite[pp. 32-33]{Falconer0}, we have the following comparison between Hausdorff outer measure $\mathcal{H}^s$ and outer measure $\mathcal{B}^s$
\begin{align}
\label{pallopeitteetvrtkaikki peitteet}
\mathcal{H}^s(E)\leq {\mathcal{B}}^s(E)\leq 2^s\mathcal{H}^s(E).
\end{align}
In particular, this implies that the values of $s$ at which $\mathcal{H}^s$ and ${\mathcal{B}}^s$ jumps from $\infty$ to $0$ are equal, so that the dimensions defined by the two outer measures are equal.

\begin{definition}
A homeomorphism $f:{\C} \to {\C}$ is \emph{$K$-quasiconformal} with $K\geq 1$ if it is orientation preserving, $f\in W_{loc}^{1, 2}({\C})$ and for almost every $z\in {\C}$ the directional derivatives satisfy 
\begin{align*}
\max_{\alpha} \left|\partial_{\alpha} f(z)\right|\leq K \min_{\alpha} \left|\partial_{\alpha} f(z)\right|.
\end{align*}
\end{definition}

Equivalently, we could define quasiconformal mappings via the Beltrami equation: a homeomorphism $f:{\C} \to {\C}$ is {$K$-quasiconformal} if it lies  in the Sobolev space $W_{loc}^{1, 2}({\C})$ and satisfies the Beltrami equation
\begin{align}
\label{Beltramiequation}
\frac{\partial f}{\partial\overline{z}}(z)=\mu(z)\frac{\partial f}{\partial z}(z) \ \ \textrm{for almost all} \ \ z\in{\C}
\end{align}
for some measurable function $\mu:{\C}\to {\C}$ with $\left\|\mu\right\|_{\infty}\leq k <1$, where $k=(K-1)/(K+1)$  and $\left\|\mu\right\|_{\infty}$ is essential supremum. We call $\mu=\mu_f$ the Beltrami coefficient of $f$. We refer to the book \cite{AIM} for a detailed account on the theory of quasiconformal mappings, especially chapter 13 is relevant for the questions discussed in this paper.

Next, let us consider some special classes of quasiconformal maps. If the Beltrami coefficient is compactly supported an often convenient normalization is the following.

\begin{definition}
A quasiconformal mapping $f \colon \C \to \C$ is called a \emph{principal mapping} if its Beltrami coefficient is compactly supported and satisfies $f(z)=z+O\left(\frac{1}{z}\right)$ as $z \to \infty$.
\end{definition}

\begin{definition}
A quasiconformal mapping $f:{\C} \to {\C}$ is \emph{antisymmetric} if its Beltrami coefficient $\mu_f$ satisfies
\begin{align*}
\mu_f(z)=-\overline{\mu_f(\overline{z})}
\end{align*}
for almost all $z\in {\C}$. 
\end{definition}

This definition is motivated by the work \cite{Smi}.
Conversely, we call a quasiconformal mapping $f:{\C} \to {\C}$ \emph{symmetric} if $f(\overline{z})=\overline{f(z)}$ for all $z\in{\C}$. In this case, $f$ preserves real line (i.e. $f({\R})={\R}$) and $\mu_f(\overline{z})=\overline{\mu_f(z)}$ for almost all $z\in{\C}$. We remark that in \cite{PrSm} such maps were called $k$-quasisymmetric.

The relevance of the special classes above is that any quasiconformal map can be decomposed to a symmetric map followed by an antisymmetric one \cite{Smi}. We recall the precise statement in Section \ref{se:improved} below. Related decomposition ideas go back to \cite{kuhnau}.

One key idea to understand dimension distortion of quasiconformal maps in \cite{Astala} was the connection to holomorphic motions. We recall the definition.
\begin{definition}
Let $A\subset {\C}$. A \emph{holomorphic motion} of $A$ is a map $\Phi: \mathbb{D} \times A \to {\C}$ such that 
\begin{itemize}
	\item For any fixed $a\in A$,  the map $\lambda \mapsto \Phi(\lambda, a)$ is holomorphic in $\mathbb{D}$. \\
	\item For any fixed $\lambda\in \mathbb{D}$, the map $a \mapsto \Phi(\lambda, a)=\Phi_{\lambda}(a)$ is an injection.\\
	\item The mapping $\Phi_0$ is the identity on $A$, i. e. $\Phi(0, a)=a$ for every $a\in A$. 
\end{itemize}
\end{definition}

Any quasiconformal map can be embedded into a holomorphic motion captured by the following theorem.
\begin{theorem}[Theorem 12.5.3 \cite{AIM}]
\label{thm:embedded}
Let $f:{\C} \to {\C}$ be $K=\frac{1+k}{1-k}$-quasiconformal. Then there exists a holomorphic motion $\Phi: \mathbb{D}\times {\C} \to {\C}$ such that $f(z)=\Phi(k, z)=\Phi_k(z)$, $z\in {\C}$.
\end{theorem}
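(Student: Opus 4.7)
The plan is to construct $\Phi$ via a one-parameter family of solutions to Beltrami equations and then invoke the holomorphic dependence on parameters in the measurable Riemann mapping theorem (Ahlfors--Bers). Let $\mu = \mu_f$ be the Beltrami coefficient of $f$, so $\|\mu\|_\infty \le k$. For $\lambda \in \mathbb{D}$, define
\[
\mu_\lambda(z) \;=\; \frac{\lambda}{k}\,\mu(z),
\]
which is a measurable function on $\C$ with $\|\mu_\lambda\|_\infty \le |\lambda| < 1$. In particular, $\mu_0 \equiv 0$ and $\mu_k = \mu$.

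Next, for each $\lambda \in \mathbb{D}$, I would let $f^\lambda \colon \C \to \C$ be the unique quasiconformal solution of the Beltrami equation $\partial_{\bar z} g = \mu_\lambda \,\partial_z g$ with a fixed normalization (e.g.\ principal normalization if $\mu$ has compact support, otherwise fix the three points $0,1,\infty$; one may always conjugate $f$ beforehand so that $f$ itself satisfies this normalization, so that $f^k = f$). Setting $\Phi(\lambda, z) := f^\lambda(z)$, there are three properties to verify. First, for each fixed $\lambda$, the map $z \mapsto \Phi_\lambda(z) = f^\lambda(z)$ is a homeomorphism, so in particular injective. Second, $\Phi_0$ is a $1$-quasiconformal (hence conformal) self-map of $\C$ with the same normalization as the identity, so $\Phi_0 = \id$.

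The main step is the holomorphicity of $\lambda \mapsto \Phi(\lambda, z)$ for each fixed $z$. This is exactly the content of the Ahlfors--Bers theorem on holomorphic dependence on parameters: since the family $\lambda \mapsto \mu_\lambda$ is an affine (hence holomorphic) $L^\infty(\C)$-valued function of $\lambda$ with $\|\mu_\lambda\|_\infty \le |\lambda|$ uniformly bounded by $r<1$ on each compact subdisk $\{|\lambda| \le r\}$, the normalized solutions $f^\lambda$ depend holomorphically on $\lambda$ pointwise in $z$. I would cite this as the standard consequence of the parametric measurable Riemann mapping theorem (see \cite[Ch.~5]{AIM}).

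The main obstacle, if one wanted to go beyond citation, is precisely this parametric holomorphy: it ultimately relies on the holomorphic dependence of the Cauchy and Beurling transforms' Neumann series $\sum (\mu_\lambda S)^n$ on the parameter, which requires the uniform bound $\|\mu_\lambda\|_\infty < 1$ on compact subsets of $\mathbb{D}$ and the operator-norm boundedness of the Beurling transform. Granted this, combining the three verifications above yields a holomorphic motion $\Phi$ of $\C$ with $\Phi(k,z) = f^k(z) = f(z)$, as required.
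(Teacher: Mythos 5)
This statement is quoted from the book of Astala--Iwaniec--Martin, so the paper gives no proof of its own; your construction is exactly the standard one behind that reference: scale the Beltrami coefficient by $\lambda/k$, solve the resulting family of Beltrami equations, and invoke Ahlfors--Bers holomorphic dependence on parameters. The three verifications you list are the right ones.

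There is, however, one genuine (if small) gap in the normalization step. You propose to ``conjugate $f$ beforehand'' so that $f$ fixes $0,1,\infty$ and coincides with the normalized solution $f^k$. Conjugation $f \mapsto A^{-1}\circ f\circ A$ by an affine $A$ would require $A(0)$ and $A(1)$ to be fixed points of $f$, but $f$ need not have any fixed points (a translation is $1$-quasiconformal). Post-composition $f\mapsto B\circ f$ does normalize $f$, but the motion built from $B\circ f$ satisfies $\Phi_k = B\circ f$, and undoing $B$ at the end destroys $\Phi_0=\id$, which the definition of holomorphic motion requires. The standard fix: let $g^\lambda$ be the normalized solutions, so $f = M\circ g^k$ for an affine bijection $M(z)=az+b$, $a\neq 0$. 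Pick $w$ with $e^w=a$ and set
\[
M_\lambda(z) := e^{(\lambda/k)w}\,z + \tfrac{\lambda}{k}\,b ,
\]
a holomorphic-in-$\lambda$ family of affine bijections with $M_0=\id$ and $M_k=M$. Then $\Phi_\lambda := M_\lambda\circ g^\lambda$ is a holomorphic motion with $\Phi_0=\id$ and $\Phi_k=f$. With this correction your argument is complete and matches the cited proof.
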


Finally, we recall some relevant definitions and results about harmonic functions from \cite{FRY}.
\begin{definition}
A harmonic function $h:\mathbb{D} \to {\R}$ is called \emph{symmetric} if $h(\overline{\lambda})=h(\lambda)$ for all $\lambda\in \mathbb{D}$. A function $u \colon \mathbb{D} \to [0, \infty)$ is \emph{inf-sym-harmonic} if there is a family $\mathcal{H}$ of symmetric harmonic functions on $\mathbb{D}$ such that
\begin{align*}u(\lambda)=\inf_{h\in {\mathcal{H}}} h(\lambda) \ \ \, \lambda \in \mathbb{D}.
\end{align*}
\end{definition}

Note that an inf-sym-harmonic is required to be non-negative.
We will need the following variant of Harnack's inequality for such functions, stated as Lemma 10.7 in \cite{FRY}. See also Lemma 13.3.8 in \cite{AIM} for an equivalent version.
\begin{lemma}[Lemma 10.7 \cite{FRY}]
\label{Lemma107}
Let $v:\mathbb{D} \to [0, \infty)$ be an inf-sym-harmonic function. Then
\begin{align*}
\frac{1-y^2}{1+y^2}v(0)\leq v(iy)\leq \frac{1+y^2}{1-y^2}v(0),    \quad \text{for} \ y\in (-1, 1).
\end{align*}
\end{lemma}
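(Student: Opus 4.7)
The plan is to reduce the claim to a pointwise Harnack-type inequality for a single non-negative symmetric harmonic function and to derive that inequality via the Poisson integral representation.

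First, I would exploit that the hypothesis $v \ge 0$ is stronger than it might look: since $v(\lambda)=\inf_{h\in\mathcal{H}} h(\lambda) \ge 0$ for every $\lambda \in \mathbb{D}$, each individual $h \in \mathcal{H}$ must itself be non-negative on $\mathbb{D}$ (indeed $h(\lambda) \ge v(\lambda) \ge 0$). Consequently $\mathcal{H}$ actually consists of non-negative symmetric harmonic functions on $\mathbb{D}$, and it suffices to prove the Harnack-type inequality for each such $h$ separately and then pass to the infimum.

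Second, for a fixed non-negative symmetric harmonic $h$, I would invoke the Herglotz representation $h(\lambda)=\int_{0}^{2\pi} P(\lambda,e^{i\theta})\, d\mu(\theta)$, where $P(\lambda,e^{i\theta})=(1-|\lambda|^2)/|e^{i\theta}-\lambda|^2$ and $\mu$ is a finite non-negative Borel measure on the unit circle of total mass $\mu(\partial\mathbb{D})=h(0)$. Uniqueness of the Herglotz measure together with $h(\bar{\lambda})=h(\lambda)$ forces $\mu$ to be invariant under the involution $\theta\mapsto -\theta$. At $\lambda=iy$ we have $|e^{i\theta}-iy|^2=1-2y\sin\theta+y^2$; averaging $P(iy,e^{i\theta})$ against its reflected value $P(iy,e^{-i\theta})$ by means of the symmetry of $\mu$ produces
\begin{equation*}
h(iy)=\int_{0}^{2\pi}\frac{(1-y^2)(1+y^2)}{(1+y^2)^2-4y^2\sin^2\theta}\,d\mu(\theta).
\end{equation*}
The integrand is monotone in $\sin^2\theta$ and lies between $\frac{1-y^2}{1+y^2}$ (attained at $\sin\theta=0$) and $\frac{1+y^2}{1-y^2}$ (attained at $|\sin\theta|=1$). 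Using $\mu(\partial\mathbb{D})=h(0)$, this yields $\frac{1-y^2}{1+y^2}h(0) \le h(iy) \le \frac{1+y^2}{1-y^2}h(0)$.

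Third, passing to the infimum is immediate because the constants $c^{\pm}=(1\pm y^2)/(1\mp y^2)$ do not depend on $h$. For the upper bound, every $h\in \mathcal{H}$ satisfies $v(iy)\le h(iy)\le c^{+}h(0)$, and taking the infimum over $h$ on the right yields $v(iy)\le c^{+}v(0)$. For the lower bound, each $h$ satisfies $h(iy)\ge c^{-}h(0)\ge c^{-}v(0)$ (using $h(0)\ge v(0)$ and $c^{-}\ge 0$), and then the infimum over $h$ on the left gives $v(iy)\ge c^{-}v(0)$.

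The conceptual heart of the argument is the symmetrization step that replaces the Poisson kernel at $iy$ by its even (in $\theta$) part, since it is precisely the symmetry of $\mu$ that accounts for the improvement from the usual Harnack factor $(1+|y|)/(1-|y|)$ to $(1+y^2)/(1-y^2)$. I do not foresee a genuine obstacle: once the observation that $h\ge 0$ for every $h\in \mathcal{H}$ is in place, the remainder is a short and explicit computation.
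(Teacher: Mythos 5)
Your proof is correct. Note, however, that the paper does not supply its own proof of this lemma: it simply cites Lemma~10.7 of \cite{FRY} (and points to Lemma~13.3.8 of \cite{AIM} as an equivalent statement), so there is no in-paper argument to compare against. Your argument is the standard route to this sharpened ``symmetric Harnack'' inequality: the observation that non-negativity of the infimum forces each $h\in\mathcal{H}$ to be non-negative, the Herglotz representation with a reflection-symmetric measure, and the replacement of the Poisson kernel at $iy$ by its even part in $\theta$,
\[
\tfrac12\bigl(P(iy,e^{i\theta})+P(iy,e^{-i\theta})\bigr)=\frac{(1-y^2)(1+y^2)}{(1+y^2)^2-4y^2\sin^2\theta}\in\Bigl[\tfrac{1-y^2}{1+y^2},\,\tfrac{1+y^2}{1-y^2}\Bigr],
\]
which is exactly what upgrades the usual Harnack factor $(1+|y|)/(1-|y|)$ to $(1+y^2)/(1-y^2)$. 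The passage to the infimum at the end (infimum on the right for the upper bound, infimum on the left together with $h(0)\ge v(0)$ and $c^-\ge 0$ for the lower bound) is handled correctly.
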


\section{Dimension distortion for antisymmetric maps} \label{se:antisymmetric}

In this section, we establish dimension distortion results for antisymmetric maps. 
We make use of the recent work \cite{FRY} which leads to elegant short proofs. 
Let us introduce the following functions.
Given $t\in (0, 2]$, set
\begin{align}
\label{t}
t(k)=\frac{(1+k^2)t}{1-k^2+k^2t}\in (0, 2], \quad \text{and} \quad t^*(k)=\frac{(1-k^2)t}{1+k^2-k^2t}\in (0, 2]. \end{align}

We will start with the dimension estimate from above. 
\begin{theorem}
\label{upward}
Let $E\subset {\R}$ such that $0<t:=\dimh (E)\leq1$. Suppose that $f:{\C} \to {\C}$ is a $K$-quasiconformal antisymmetric map.  Then $\dimh (f(E))\leq t(k)$, where $t(k)$ is defined in \eqref{t}.
\end{theorem}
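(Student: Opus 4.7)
The plan is to run the inf-sym-harmonic framework of \cite{FRY} inside a holomorphic motion in which $f$ is attached at a \emph{purely imaginary} parameter value; the antisymmetry of $\mu_f$ will then furnish the reflection symmetry $\lambda\mapsto\bar\lambda$ needed to invoke Lemma \ref{Lemma107} along the imaginary axis (which is where $k^2$ replaces the usual $k$). Concretely, I set $\mu_\lambda(z):=-i(\lambda/k)\mu_f(z)$ and let $\Psi_\lambda$ denote the normalized quasiconformal solution of the Beltrami equation with coefficient $\mu_\lambda$. Then $\|\mu_\lambda\|_\infty\leq|\lambda|$, $\Psi_0=\id$, and $\Psi_{ik}=f$, and by Theorem \ref{thm:embedded} the family $\{\Psi_\lambda\}$ is a holomorphic motion on $\mathbb{D}$.

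Next, I would extract the key symmetry from the antisymmetry of $\mu_f$. A short chain-rule computation using $\mu_f(\bar z)=-\overline{\mu_f(z)}$ shows $\mu_\lambda(\bar z)=\overline{\mu_{\bar\lambda}(z)}$, so $z\mapsto\overline{\Psi_{\bar\lambda}(\bar z)}$ satisfies the Beltrami equation with coefficient $\mu_\lambda$; normalization uniqueness then yields $\Psi_\lambda(\bar z)=\overline{\Psi_{\bar\lambda}(z)}$. Since $E\subset\R$, this implies $\Psi_{\bar\lambda}(E)=\overline{\Psi_\lambda(E)}$, and consequently any Hausdorff-dimension or content functional of $\Psi_\lambda(E)$ is invariant under $\lambda\mapsto\bar\lambda$.

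I would then apply the \cite{FRY} framework. For $\eps>0$ and a ball cover $\{B_i\}$ of $E$ with $\sum_i(\diam B_i)^{t+\eps}\leq 1$, choose an Astala-type exponent $s(\lambda)$ so that $\tfrac{1}{s(\lambda)}-\tfrac{1}{2}$ is symmetric-harmonic in $\lambda$ with $s(0)=t+\eps$ and is adapted to the qc distortion of the cover under $\Psi_\lambda$. Applied to $\sum_i(\diam \Psi_\lambda(B_i))^{s(\lambda)}$ and then infimized over all admissible covers, this produces (as in \cite{FRY}) an inf-sym-harmonic function $V:\mathbb{D}\to[0,\infty)$ with
\[
V(0)=\tfrac{1}{t+\eps}-\tfrac{1}{2}, \qquad V(ik)\geq\tfrac{1}{\dimh(f(E))}-\tfrac{1}{2}.
\]
The required symmetry $V(\bar\lambda)=V(\lambda)$ is exactly what the previous paragraph establishes. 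Finally, Lemma \ref{Lemma107} at $y=k$ yields $V(ik)\geq\tfrac{1-k^2}{1+k^2}V(0)$; letting $\eps\downarrow 0$ and simplifying,
\[
\frac{1}{\dimh(f(E))} \,\geq\, \frac{1-k^2}{(1+k^2)t}+\frac{k^2}{1+k^2} \,=\, \frac{1-k^2+k^2t}{(1+k^2)t} \,=\, \frac{1}{t(k)},
\]
so $\dimh(f(E))\leq t(k)$.

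The main obstacle is the third paragraph: verifying that the Astala-type combination $\tfrac{1}{s(\lambda)}-\tfrac{1}{2}$ really produces a genuinely inf-\emph{sym}-harmonic (not merely inf-harmonic) function along our motion $\Psi_\lambda$, with the claimed boundary values at $\lambda=0$ and $\lambda=ik$. The reflection symmetry from the second paragraph is exactly what upgrades inf-harmonic to inf-sym-harmonic; the rest is book-keeping supplied by the \cite{FRY} framework.
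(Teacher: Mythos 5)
Your approach matches the paper's: both embed the antisymmetric map $f$ at the purely imaginary parameter value $ik$ in a holomorphic motion whose Beltrami coefficients satisfy the reflection symmetry $\mu_\lambda(\bar z)=\overline{\mu_{\bar\lambda}(z)}$, then invoke the inf-sym-harmonic framework of \cite{FRY} (the paper cites Lemma~10.4 there directly, where you sketch the construction) and Lemma~\ref{Lemma107} along the imaginary axis. One small correction: the inequality you state for the value at $ik$ is reversed. The FRY construction produces $u(\lambda)\leq 1/\dimh(E_\lambda)$, so $V(ik)\leq 1/\dimh(f(E))-\tfrac12$, not $\geq$; combined with $V(ik)\geq\tfrac{1-k^2}{1+k^2}V(0)$, this chain $\tfrac{1-k^2}{1+k^2}V(0)\leq V(ik)\leq 1/\dimh(f(E))-\tfrac12$ is exactly what yields $1/\dimh(f(E))\geq 1/t(k)$. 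As written, your two lower bounds on $V(ik)$ do not combine. Since you do arrive at the correct final inequality, this reads as a slip of the pen rather than a conceptual error, but it should be fixed.
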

\begin{proof}
Without loss of generality we may assume that $f$ is normalised so that $f(0)=0$ and $f(1)=1$. 
By antisymmetric property of function $f$ we can find a symmetric holomorphic motion $\Phi:\mathbb{D} \times {\C} \to {\C}$ such that $\Phi_{ik}=f$. Here, symmetry of holomorphic motion means that $\Phi_{\lambda}(z)=\overline{\Phi_{\overline{\lambda}}(\overline{z})}$ for all $\lambda\in \mathbb{D}$ and $z\in {\C}$. 
The construction of such holomorphic motion is based on the standard proof of Theorem \ref{thm:embedded}, see e.g. \cite[page 479]{FRY}.
By Lemma 10.4 of \cite{FRY}, there is an inf-sym-harmonic function $u$ on $\mathbb{D}$ such that 
\begin{align}
\label{tahti1}
u(0)=\frac{1}{\dimh(E)}         \ \  \ \textrm{and} \ \ \  \frac{1}{2}\leq u(\lambda)\leq \frac{1}{\dimh(E_{\lambda})},
\end{align}
where $\lambda \in \mathbb{D}$ and $E_{\lambda}:=\Phi_{\lambda}(E)$. The function $v:=u-\frac{1}{2}$ is also inf-sym-harmonic and by Lemma \ref{Lemma107} 
\begin{align}
\label{tahti2}
v(ik)\geq \frac{1-k^2}{1+k^2}v(0)=\frac{1-k^2}{1+k^2}\left(\frac{1}{\dimh(E)}-\frac{1}{2}\right).
\end{align}
Using \eqref{tahti1}, we obtain
\begin{align}
\label{tahti3}
v(ik)=u(ik)-\frac{1}{2}\leq \frac{1}{\dimh(E_{ik})}-\frac{1}{2}=\frac{1}{\dimh(\Phi_{ik}(E))}-\frac{1}{2}=\frac{1}{\dimh(f(E))}-\frac{1}{2}.
\end{align}
By combining \eqref{tahti2} and \eqref{tahti3}, we obtain
\begin{align}
\label{tahti4}
\frac{1-k^2}{1+k^2}\left(\frac{1}{\dimh(E)}-\frac{1}{2}\right) \leq \frac{1}{\dimh(f(E))}-\frac{1}{2}
\end{align}
It follows from \eqref{tahti4} that
\begin{align*}
\dimh(f(E)) \leq \frac{(1+k^2)\dimh(E)}{1-k^2+k^2\dimh(E)}=\frac{(1+k^2)t}{1-k^2+k^2t}.
\end{align*}
\end{proof}
\old{\begin{proof}
As the set $E$ can be covered by countably many sets of the form $E \cap [n/2,n/2+1]$, $n \in \mathbb{Z}$. Without loss of generality we may assume that $E\subset [-\frac{1}{2}, \frac{1}{2}]$ with $0<t:=\dimh (E)\leq1$. 
If we choose $t+\epsilon^*$, where $\epsilon^*>0$, we obtain $H^{t+\epsilon^*}(E)=0$. Hence $\lim_{\delta \to 0^+} H_{\delta}^{t+\epsilon^*}(E)=0$. By definition there exist $\delta_1>0$ such that for all $0<\delta \leq \delta_1$
\begin{align*}
0\leq H_{\delta}^{t+\epsilon^*}(E)<1.
\end{align*}
Let $0<\delta\leq \delta_1$. So we have 
\begin{align*}
H_{\delta}^{t+\epsilon^*}(E)=\inf\left\{\sum_{i=1}^{\infty} \left|U_i\right|^{t+\epsilon^*}:\{U_i\} \ \textrm{is $\delta$-cover of $E$}\right\}<1.
\end{align*}
Jollakin peitteella $\{U_i\}_{i=1}^{\infty}$ siten, etta $\left|U_i\right|\leq \delta$ kaikilla $i\in {\N}$ patee
\begin{align*}
\sum_{i=1}^{\infty} \left|U_i\right|^{t+\epsilon^*}<1.
\end{align*}
Jos yhden alkion joukoista eriavat joukot $U_i(\neq \{x_i\})$ korvataan $3\left|U_i\right|$-halkaisijaisilla avoimilla kiekoilla $B_i$ siten, etta $U_i\subset B_i$, niin 
\begin{align*}
\sum_{i=1} \left|B_i\right|^{t+\epsilon^*}=\sum_{i=1}(3\left|U_i\right|)^{t+\epsilon^*}\leq\sum_{i=1}^{\infty}(3\left|U_i\right|)^{t+\epsilon^*}=3^{t+\epsilon^*} \sum_{i=1}^{\infty}\left|U_i\right|^{t+\epsilon^*}<3^{t+\epsilon^*}\cdot 1
\end{align*}
Mahdolliset yhden alkion joukot $U_i=\{x_i\}$ korvataan avoimilla kiekoilla $B_i\left(x_i, \frac{{\epsilon(\delta)}^{\frac{1}{t+\epsilon^*}}}{(2^i)^{\frac{1}{t+\epsilon^*}}}\right)$, where $0<\epsilon(\delta)\leq{\delta}^{t+\epsilon^*}$. Then $\left|B_i\right|\leq 2\delta$ and 
\begin{align*}
\left|B_i\right|^{t+\epsilon^*}=\left(2\frac{{\epsilon(\delta)}^{\frac{1}{t+\epsilon^*}}}{(2^i)^{\frac{1}{t+\epsilon^*}}}\right)^{t+\epsilon^*}=2^{t+\epsilon^*}\frac{\epsilon(\delta)}{2^i}.
\end{align*}
Hence
\begin{align*}
\sum_{i=1}\left|B_i\right|^{t+\epsilon^*}=\sum_{i=1} 2^{t+\epsilon^*} \frac{\epsilon(\delta)}{2^i}\leq \sum_{i=1}^{\infty} 2^{t+\epsilon^*} \frac{\epsilon(\delta)}{2^i}=2^{t+\epsilon^*} \sum_{i=1}^{\infty} \frac{\epsilon(\delta)}{2^i}=2^{t+\epsilon^*} \epsilon(\delta)<3^{t+\epsilon^*} \epsilon(\delta).
\end{align*}
Thus
\begin{align*}
\sum_{i=1}^{\infty}\left|B_i\right|^{t+\epsilon^*}\leq 3^{t+\epsilon^*}(1+\epsilon(\delta)).
\end{align*}
\old{{\tiny{Jalkimmaisen suluissa olevan arvion kayttaminen tarvittaessa: Jos $U_i=\{x_i\}$ eli yhden alkion joukko, niin $\left|U_i\right|=0$ ja $\left|B_i\right|=3\left|U_i\right|=0$. Talloin $B_i(x_i, 0)=\emptyset$ - ei voi peittaa joukkoa $\{x_i\}$. Otetaan $B_i\left(x_i, \frac{{\epsilon(\delta)}^{\frac{1}{t+\epsilon^*}}}{(2^i)^{\frac{1}{t+\epsilon^*}}}\right)$, missa $0<\epsilon(\delta)\leq{\delta}^{t+\epsilon^*}$ (talloin $\left|B_i\right|\leq 2\delta$). Talloin 
\begin{align*}
\left|B_i\right|^{t+\epsilon^*}=\left(2\frac{{\epsilon(\delta)}^{\frac{1}{t+\epsilon^*}}}{(2^i)^{\frac{1}{t+\epsilon^*}}}\right)^{t+\epsilon^*}=2^{t+\epsilon^*}\frac{\epsilon(\delta)}{2^i}.
\end{align*}
Vaikka kaikki joukot $\{U_i\}$ olisi naita yhden pisteen joukkoja niin
\begin{align*}
\sum_{i=1}^{\infty}\left|B_i\right|^{t+\epsilon^*}=\sum_{i=1}^{\infty} 2^{t+\epsilon^*} \frac{\epsilon(\delta)}{2^i}=2^{t+\epsilon^*} \sum_{i=1}^{\infty} \frac{\epsilon(\delta)}{2^i}=2^{t+\epsilon^*} \epsilon(\delta)<3^{t+\epsilon^*} \epsilon(\delta).
\end{align*}}}}
Edelleen joukkojen $B_i$ keskipisteet voidaan siirtaa reaaliakselille ($B_i \rightarrow B_i^*$, $E\subset \cup_{i=1}^{\infty} B_i^*$)
\begin{align*}
\sum_{i=1}^{\infty}\left|B_i^*\right|^{t+\epsilon^*}=\sum_{i=1}^{\infty}\left|B_i\right|^{t+\epsilon^*}<3^{t+\epsilon^*}(1+\epsilon(\delta)).
\end{align*}
Otetaan seuraavaksi palloista $\{B_i^*\}$ ne joiden keskipisteet kuuluu valille $[-\frac{1}{2}, \frac{1}{2}]$. Lisaksi otetaan ne pallot $\{B_i^*\}$, jotka leikkaavat valia $[-\frac{1}{2}, \frac{1}{2}]$, mutta joiden keskipisteet eivat kuulu valille $[-\frac{1}{2}, \frac{1}{2}]$. Talloin
\begin{align*}
\sum_i \left(\frac{\left|B_i^*\right|}{2}\right)^{t+\epsilon^*} \leq\sum_{i=1}^{\infty} \left(\frac{\left|B_i^*\right|}{2}\right)^{t+\epsilon^*}\leq\sum_{i=1}^{\infty}\left|B_i^*\right|^{t+\epsilon^*}<3^{t+\epsilon^*}(1+\epsilon(\delta))
\end{align*}
Jalkimmaisena mainittujen pallojen keskipisteet voidaan siirtaa valin paatepisteisiin $-\frac{1}{2}$ ja $\frac{1}{2}$ siten, etta "peite on parempi" ja ylla oleva summa sailyy ennallaan. Taten $E\subset [-\frac{1}{2}, \frac{1}{2}]$ voidaan peittaa kahdella kokoelmalla pareittain pistevieraita palloja 
\begin{align*}
\{B_i(z_i, r_i)\}_i \ \  \textrm{such that} \ \ z_i\in \left[-\frac{1}{2}, \frac{1}{2}\right] \ \ \textrm{and} \ \ \left|B_i\right|\leq 3\delta \ \ \textrm{for all} \ \ i \ \ \textrm{and} \ \ \sum_i r_i^{t+\epsilon^*}< 3^{t+\epsilon^*}(1+\epsilon(\delta))     
\end{align*}
ja
\begin{align*}
\{B_j(z_j, r_j)\}_j \ \  \textrm{such that} \ \  z_j\in \left[-\frac{1}{2}, \frac{1}{2}\right] \ \ \textrm{and} \ \ \left|B_j\right|\leq 3\delta \ \ \textrm{for all} \ \ j \ \ \textrm{and} \ \ \sum_j r_j^{t+\epsilon^*}< 3^{t+\epsilon^*}(1+\epsilon(\delta))     
\end{align*}
\old{{\tiny{Jos seuraavassa peitteessa ((pallot $\{B_i^*\}$ joiden keskipisteet kuuluu valille $[-\frac{1}{2}, \frac{1}{2}]$ $\cup$  pallot $\{B_i^*\}$, jotka leikkaavat valia $[-\frac{1}{2}, \frac{1}{2}]$, mutta joiden keskipisteet eivat kuulu valille $[-\frac{1}{2}, \frac{1}{2}]$ siirrettyna paatepisteisiin $-\frac{1}{2}$ ja $\frac{1}{2})$ $\setminus$ turhat pallot)  on infinitely many balls, niin voi olla seuraava tilanne $\sum_{i=1}^{\infty} r_i^{t+\epsilon^*}< 3^{t+\epsilon^*}(1+\epsilon(\delta))$ and $\sum_{j=1}^{\infty} r_j^{t+\epsilon^*}< 3^{t+\epsilon^*}(1+\epsilon(\delta))$ (eli etta peittamiseen tarvitaan aareton maara palloja). Koska arviot \eqref{uniformly1} ja \eqref{uniformly2} patevat uniformly, niin arviot \eqref{uniformly1} and \eqref{uniformly2} patevat myos tilanteessa $\sum_{i=1}^{\infty}$ ja $\sum_{j=1}^{\infty}$. Rivin \eqref{cover1} peite $\cup_{i=1}^{\infty} \cup \sum_{j=1}^{\infty}$ on numeroituva. Arvio \eqref{uniformly3} patee uniformly, joten arvio \eqref{uniformly3} patee myos tilanteessa  $\sum_{i=1}^{\infty}+\sum_{j=1}^{\infty}$.}}}\\
Let $0\leq k<\alpha<1$ as in \cite{AIM}. By \cite{AIM} (13.52) we obtain following upward estimates
\begin{align}
\label{uniformly1}
\sum_i\left|f(B_i(z_i, r_i)\right|^{(t+\epsilon^*)(\alpha)}&\leq e^{\pi\frac{1+\alpha}{1-\alpha}(t+\epsilon^*)(\alpha)}\cdot 64 \cdot {\ell}^{-(t+\epsilon^*)(\alpha)}\cdot \left(\sum_i r_i^{t+\epsilon^*}\right)^{\frac{1-{\alpha}^2}{1+{\alpha}^2}\frac{(t+\epsilon^*)(\alpha)}{t+\epsilon^*}}\nonumber\\
&\leq e^{\pi\frac{1+\alpha}{1-\alpha}(t+\epsilon^*)(\alpha)}\cdot 64 \cdot {\ell}^{-(t+\epsilon^*)(\alpha)}\cdot \left(3^{t+\epsilon^*}(1+\epsilon(\delta))\right)^{\frac{1-{\alpha}^2}{1+{\alpha}^2}\frac{(t+\epsilon^*)(\alpha)}{t+\epsilon^*}}
\end{align}
and
\begin{align}
\label{uniformly2}
\sum_j\left|f(B_j(z_j, r_j)\right|^{(t+\epsilon^*)(\alpha)}&\leq e^{\pi\frac{1+\alpha}{1-\alpha}(t+\epsilon^*)(\alpha)}\cdot 64 \cdot {\ell}^{-(t+\epsilon^*)(\alpha)}\cdot \left(\sum_j r_j^{t+\epsilon^*}\right)^{\frac{1-{\alpha}^2}{1+{\alpha}^2}\frac{(t+\epsilon^*)(\alpha)}{t+\epsilon^*}}\nonumber\\
&\leq e^{\pi\frac{1+\alpha}{1-\alpha}(t+\epsilon^*)(\alpha)}\cdot 64 \cdot {\ell}^{-(t+\epsilon^*)(\alpha)}\cdot \left(3^{t+\epsilon^*}(1+\epsilon(\delta)\right)^{\frac{1-{\alpha}^2}{1+{\alpha}^2}\frac{(t+\epsilon^*)(\alpha)}{t+\epsilon^*}},
\end{align}
where $\ell=e^{-\pi \frac{1+k/\alpha}{1-k/\alpha}}$ is as in \cite{AIM} (page 341). Since 
\begin{align}
\label{cover1}
f(E)\subset \bigcup_i f(B_i(z_i, r_i))\cup \bigcup_j f(B_j(z_j, r_j)),
\end{align}
so this $\max\{\sup_i \left|f(B_i(z_i, r_i))\right|, \sup_j \left|f(B_j(z_j, r_j))\right|\}$-cover of  $f(E)$ satisfies
\begin{align}
\label{uniformly3}
&\sum_i\left|f(B_i(z_i, r_i)\right|^{(t+\epsilon^*)(\alpha)}+\sum_j\left|f(B_j(z_j, r_j)\right|^{(t+\epsilon^*)(\alpha)}\nonumber\\
&\leq 2\cdot e^{\pi\frac{1+\alpha}{1-\alpha}(t+\epsilon^*)(\alpha)}\cdot 64 \cdot {\ell}^{-(t+\epsilon^*)(\alpha)}\cdot \left(3^{t+\epsilon^*}(1+\epsilon(\delta))\right)^{\frac{1-{\alpha}^2}{1+{\alpha}^2}\frac{(t+\epsilon^*)(\alpha)}{t+\epsilon^*}}:=C(\alpha, \epsilon^*, \epsilon(\delta), K).
\end{align}
Further 
\begin{align*}
&H_{\max\{\sup_i \left|f(B_i(z_i, r_i))\right|, \sup_j \left|f(B_j(z_j, r_j))\right|\}}^{(t+\epsilon^*)(\alpha)} (f(E))\\
&=\inf\left\{\sum_{i=1}^{\infty} \left|U_i\right|^{(t+\epsilon^*)(\alpha)}: \{U_i\} \ \ \textrm{is} \ \  \max\{\sup_i \left|f(B_i(z_i, r_i))\right|, \sup_j \left|f(B_j(z_j, r_j))\right|\}\textrm{-cover of} \ f(E)\right\}\\
&\leq C(\alpha, \epsilon^*, \epsilon(\delta), K).
\end{align*}
By Holder-continuity of $f$
\begin{align*}
\lim_{\delta \downarrow 0} \sup_{\delta_i}\left|f(B_{\delta_i})\right|=0.
\end{align*}
{\tiny{Mita ylla oleva tarkoittaa: jokaista $0<\delta\leq \delta_1$ kohti valitaan pallot $\{B_{\delta_i}\}$ s.e. $\left|B_{\delta_i}\right|\leq 3\delta$ kaikilla $\delta_i$. Jokaisella $0<\delta\leq \delta_1$ otetaan $\sup_{\delta_i}\left|f(B_{\delta_i})\right|$ missa siis $\left|B_{\delta_i}\right|\leq 3\delta$ kaikilla $\delta_i$. Ja sitten annetaan $\delta \downarrow 0$.}}\\
Thus
\begin{align}
\label{HMaarellinen}  
H^{(t+\epsilon^*)(\alpha)}(f(E))&=\lim_{\delta \downarrow 0} H_{\delta}^{(t+\epsilon^*)(\alpha)} (f(E))\nonumber\\
&\leq \lim_{\delta \downarrow 0} 2\cdot e^{\pi\frac{1+\alpha}{1-\alpha}(t+\epsilon^*)(\alpha)}\cdot 64 \cdot {\ell}^{-(t+\epsilon^*)(\alpha)}\cdot \left(3^{t+\epsilon^*}(1+\epsilon(\delta))\right)^{\frac{1-{\alpha}^2}{1+{\alpha}^2}\frac{(t+\epsilon^*)(\alpha)}{t+\epsilon^*}}\\
&=2\cdot e^{\pi\frac{1+\alpha}{1-\alpha}(t+\epsilon^*)(\alpha)}\cdot 64 \cdot {\ell}^{-(t+\epsilon^*)(\alpha)}\cdot \left(3^{t+\epsilon^*}\right)^{\frac{1-{\alpha}^2}{1+{\alpha}^2}\frac{(t+\epsilon^*)(\alpha)}{t+\epsilon^*}}<\infty\nonumber,
\end{align}
since $\epsilon(\delta) \downarrow 0$ when $\delta \downarrow 0$. It follows from \eqref{HMaarellinen} that $\dimh (f(E))\leq (t+\epsilon^*)(\alpha)$. Giving $\epsilon^*\downarrow 0$, we obtain  
\begin{align*}
\dimh (f(E))=\left(\lim_{\epsilon^* \downarrow 0} \dimh (f(E))\right)\leq \lim_{\epsilon^* \downarrow 0} (t+\epsilon^*)(\alpha)=t(\alpha).
\end{align*}
Further
\begin{align*}
\dimh (f(E))=\left(\lim_{\alpha \downarrow k}\dimh (f(E))\right)\leq \lim_{\alpha \downarrow k} t(\alpha)=t(k)
\end{align*}
\end{proof}
\old{\begin{remark}{\tiny{$E\subset [-\frac{1}{2}, \frac{1}{2}]$ such that $t:=\dimh (E)=1$. Tallon triviaalisti pareittain pistevieraiden pallojen (keskipisteet $\in [-\frac{1}{2}, \frac{1}{2}]$) sateille patee 
\begin{align*}
\sum_{i=1}^{n_1} r_i^{1+\epsilon^*}\leq \sum_{i=1}^{n_1} r_i<1
\end{align*}
ja
\begin{align*}
\sum_{i=1}^{n_2} r_i^{1+\epsilon^*}\leq \sum_{i=1}^{n_2} r_i<1.
\end{align*}
Talloin alun (todistuksen ensimmainen sivu) paattelyille ei ole tarvetta.}}
\end{remark}}}

We now turn to the lower estimate for antisymmetric maps and first discuss (upper) Minkowski dimension, denoted by $\udimm$.

\begin{theorem} \label{thm:Minkowskilower}
Let $E\subset {\R}$ be a bounded set with $0<t:=\udimm (E)\leq1$. Suppose that $f:{\C} \to {\C}$ is antisymmetric $K$-quasiconformal  map.  Then $\udimm (f(E))\geq t^*(k)$, where $t^*(k)$ is defined in \eqref{t}.
\end{theorem}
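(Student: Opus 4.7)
The strategy is to run the argument of Theorem \ref{upward} in reverse: we use upper Minkowski dimension instead of Hausdorff dimension, and invoke the upper half of Lemma \ref{Lemma107} rather than the lower half.

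First, since $f$ is antisymmetric, embed it into a symmetric holomorphic motion $\Phi : \mathbb{D} \times \C \to \C$ with $\Phi_{ik} = f$, exactly as in the proof of Theorem \ref{upward}. Next, invoke the upper Minkowski analogue of Lemma 10.4 of \cite{FRY} to produce an inf-sym-harmonic function $u : \mathbb{D} \to [1/2, \infty)$ satisfying
\[
u(0) = \frac{1}{\udimm(E)}, \qquad u(\lambda) \geq \frac{1}{\udimm(\Phi_\lambda(E))} \text{ for } \lambda \in \mathbb{D}.
\]
The boundedness assumption on $E$ enters here, ensuring that $\udimm$ is finite and that uniform $\delta$-covers behave well along the motion. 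Note that the direction of the inequality on $u(\lambda)$ is opposite to the one in Theorem \ref{upward}; this reversal is exactly what turns an upper bound on the deformed dimension into a lower bound.

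Set $v := u - 1/2$, which is again inf-sym-harmonic and non-negative. Applying the upper estimate of Lemma \ref{Lemma107} at $\lambda = ik$ gives
\[
\frac{1}{\udimm(f(E))} - \frac{1}{2} \leq v(ik) \leq \frac{1+k^2}{1-k^2}\, v(0) = \frac{1+k^2}{1-k^2}\left(\frac{1}{t} - \frac{1}{2}\right).
\]
A short rearrangement yields $\udimm(f(E)) \geq (1-k^2)t / (1+k^2 - k^2 t) = t^*(k)$, as desired.

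The main technical obstacle is verifying the Minkowski variant of Lemma 10.4 of \cite{FRY}. The Koebe-based construction in \cite{FRY} is set up for countable covers of varying radii used in the definition of Hausdorff dimension; adapting it to $\udimm$ requires working with uniform $\delta$-covers by balls of a common radius and propagating estimates uniformly as $\delta \to 0$. Boundedness of $E$ provides the finite $\delta$-nets needed for this uniformity. Once this variant is established, the remainder is a direct computation structurally identical to the one in Theorem \ref{upward}, only with the opposite Harnack inequality.
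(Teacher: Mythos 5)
Your argument matches the paper's proof essentially step for step: embed the antisymmetric map into a symmetric holomorphic motion, produce an inf-sym-harmonic function $u$ with $u(0)=1/\udimm(E)$ and $u(\lambda)\geq 1/\udimm(\Phi_\lambda(E))$, shift to $v=u-1/2$, and apply the upper half of Lemma~\ref{Lemma107} at $\lambda=ik$. The ``main technical obstacle'' you flag is not actually an obstacle, since the upper-Minkowski analogue you want already exists in the cited source---the paper simply invokes Lemma~10.3 of \cite{FRY} (Lemma~10.4 being the Hausdorff version used in Theorem~\ref{upward}), so there is nothing to re-derive.
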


\begin{proof}
We start in the same way as in the proof of Theorem \ref{upward}. By antisymmetric property of function $f$  we have a symmetric holomorphic motion $\Phi:\mathbb{D} \times {\C} \to {\C}$ such that $\Phi_{ik}=f$. By Lemma 10.3 of \cite{FRY}, there is an inf-sym-harmonic function $u$ on $\mathbb{D}$ such that 
\begin{align}
\label{tahti11}
u(0)=\frac{1}{\udimm(E)}         \ \  \ \textrm{and} \ \ \  u(\lambda)\geq \frac{1}{\udimm(E_{\lambda})},
\end{align}
where $\lambda \in \mathbb{D}$ and $E_{\lambda}:=\Phi_{\lambda}(E)$. The function $v:=u-\frac{1}{2}$ is also inf-sym-harmonic, since it is non-negative, and by Lemma \ref{Lemma107}
\begin{align}
\label{tahti22}
v(ik)\leq \frac{1+k^2}{1-k^2}v(0)=\frac{1+k^2}{1-k^2}\left(\frac{1}{\udimm(E)}-\frac{1}{2}\right).
\end{align}
Using \eqref{tahti11}, we obtain
\begin{align}
\label{tahti33}
v(ik)=u(ik)-\frac{1}{2}\geq \frac{1}{\udimm(E_{ik})}-\frac{1}{2}=\frac{1}{\udimm(\Phi_{ik}(E))}-\frac{1}{2}=\frac{1}{\udimm(f(E))}-\frac{1}{2}.
\end{align}
By combining \eqref{tahti22} and \eqref{tahti33}, we obtain
\begin{align}
\label{tahti44}
\frac{1+k^2}{1-k^2}\left(\frac{1}{\udimm(E)}-\frac{1}{2}\right) \geq \frac{1}{\udimm(f(E))}-\frac{1}{2}
\end{align}
It follows from \eqref{tahti44} that
\begin{align*}
\udimm(f(E)) \geq \frac{(1-k^2)\udimm(E)}{1+k^2-k^2\udimm(E)}=\frac{(1-k^2)t}{1+k^2-k^2t}.
\end{align*}
\end{proof}

A shortcoming in the argument above is that it only works for Minkowski dimension. Since we are going to need the result for Hausdorff dimension, we give the proof for Hausdorff dimension, too, see Theorem \ref{downward} below. The proof is based on Section 13 of \cite{AIM} and we will first need to establish a number of auxiliary results.

We start with an extension of Theorem 13.3.6 from \cite{AIM}.
\begin{theorem}
\label{AIM13.3.6vastineALAS}
Let $f:{\C} \to {\C}$ be an antisymmetric $K$-quasiconformal map for which
\begin{itemize}
	\item $f$ is a principal mapping, conformal outside the unit disk $\mathbb{D}$. \\
	\item There is a finite union of disjoint disks $E=B(z_1, r_1)\cup \cdots \cup B(z_n, r_n) \subset \mathbb{D}$ and $z_1, ..., z_n\in {\R}$, on which $f$ is also conformal. 
\end{itemize}
We have the upper bound
\begin{align*}
\sum_{j=1}^n \left(\left|f'(z_j)\right|r_j\right)^{t(k)}\leq 8^{t(k)} \left(\sum_{j=1}^n r_j^t\right)^{\frac{1-k^2}{1+k^2} \frac{t(k)}{t}},
\end{align*}
and the lower bound
\begin{align*}
\sum_{j=1}^n \left(\left|f'(z_j)\right|r_j\right)^{t^*(k)}\geq 8^{t^*(k)} 8^{-\frac{1+k^2}{1-k^2} t^*(k)} \left(\sum_{j=1}^n r_j^t\right)^{\frac{1+k^2}{1-k^2} \frac{t^*(k)}{t}},
\end{align*}
with the exponents $t(k)$ and $t^*(k)$ defined in \eqref{t}.
\end{theorem}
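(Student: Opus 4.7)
The plan is to adapt the proof of Theorem~13.3.6 of \cite{AIM} to the antisymmetric setting. The two essential modifications are: (i)~embed $f$ into a \emph{symmetric} holomorphic motion realising $f$ at $\lambda=ik$ (rather than at $\lambda=k$), and (ii)~use the symmetric Harnack inequality of Lemma~\ref{Lemma107} at $y=k$, which produces the factor $\tfrac{1-k^2}{1+k^2}$ (respectively $\tfrac{1+k^2}{1-k^2}$) in place of the classical $\tfrac{1-k}{1+k}=\tfrac{1}{K}$ that appears in the unsymmetric argument. Concretely, I would set $\nu(z):=\mu_f(z)/(ik)$; the antisymmetry $\mu_f(z)=-\overline{\mu_f(\bar z)}$ translates into the reflection symmetry $\nu(z)=\overline{\nu(\bar z)}$, and the holomorphic family $\mu_\lambda:=\lambda\nu$ integrates to principal quasiconformal maps $\Phi_\lambda$ with $\Phi_0=\mathrm{id}$, $\Phi_{ik}=f$, and $\Phi_\lambda(\bar z)=\overline{\Phi_{\bar\lambda}(z)}$. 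Since $\mu_f$ vanishes off $\overline{\mathbb{D}}$ and on $E=\bigcup_j B(z_j,r_j)$, so does every $\mu_\lambda$, so each $\Phi_\lambda$ is conformal on $\C\setminus\overline{\mathbb{D}}$ and on every $B(z_j,r_j)$. In particular $g_j(\lambda):=\Phi_\lambda'(z_j)$ is a non-vanishing holomorphic function on $\mathbb{D}$ with $g_j(\lambda)=\overline{g_j(\bar\lambda)}$, and $\log|g_j|$ is a symmetric harmonic function.

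The core of the argument is to reproduce the interpolation/Schwarz-type construction of \cite[Sec.~13.3]{AIM} in this symmetric framework. For each $p>0$, $S_p(\lambda):=\sum_j(|g_j(\lambda)|r_j)^{p}$ is logarithmically plurisubharmonic and symmetric in $\lambda$; Koebe's $1/4$-theorem yields the uniform area bound $\sum_j(|g_j(\lambda)|r_j)^{2}\leq C$ (the $\Phi_\lambda(B(z_j,r_j))$ contain pairwise disjoint discs of radius $\tfrac14|g_j(\lambda)|r_j$ inside a region of controlled area, since $\Phi_\lambda$ is principal). Following \cite{AIM}, one combines these two ingredients via an exponent interpolation to produce a non-negative symmetric inf-sym-harmonic auxiliary function $v$ on $\mathbb{D}$ whose value at $\lambda=0$ is encoded by $\sum_j r_j^t$ and whose value at $\lambda=ik$ bounds, from above in one direction and from below in the other, $\sum_j(|f'(z_j)|r_j)^{s}$ for the appropriate exponent $s$ tied to the distortion. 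Applying Lemma~\ref{Lemma107} to $v$ at $y=k$ furnishes the comparison with ratio $\tfrac{1\pm k^2}{1\mp k^2}$, and unwinding the algebra using \eqref{t} identifies $s$ as $t(k)$ in the upper direction and $t^*(k)$ in the lower direction.

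The principal obstacle, I expect, is carrying out the construction of $v$ with the sharp constants $8^{t(k)}$ and $8^{t^*(k)}$: one has to choose the correct one-parameter family of comparison symmetric harmonic functions whose infimum is $v$, verify the exact normalisation at $\lambda=0$, and propagate the Koebe $1/4$-factor together with the diameter-versus-radius factor $2$ through the interpolation so that the composite constant $8=2\cdot 4$ appears in the stated form. Once $v$ is in place the application of Lemma~\ref{Lemma107} is essentially mechanical and parallels the Harnack step in the proofs of Theorems~\ref{upward} and \ref{thm:Minkowskilower} above.
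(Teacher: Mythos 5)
Your proposal is correct and follows essentially the same line as the paper, which simply cites \cite[Theorem 13.3.6]{AIM} verbatim for the upper bound (that theorem is already stated for antisymmetric maps, so there is nothing to ``adapt'' in that direction) and observes that the lower bound follows by rerunning the same proof with the left-hand side of the symmetric Harnack inequality (\cite[Lemma 13.3.8]{AIM}, equivalently Lemma~\ref{Lemma107}) in place of the right-hand side, turning $t(k)$ into $t^*(k)$. You identify exactly this mechanism, so the only surplus in your write-up is that you re-derive the AIM machinery (symmetric embedding at $\lambda=ik$, symmetry of $\log|g_j|$, Koebe area bound) that the paper takes as given.
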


\old{Alaspain arvioksi saadaan (jokin parempi valiteksti)
\begin{theorem}
\label{AIM13.3.6vastineALAS}
Let $f:{\C} \to {\C}$ be an  antisymmetric $K$-quasiconformal map for which
\begin{itemize}
	\item $f$ is a principal mapping, conformal outside the unit disk $\mathbb{D}$. \\
	\item There is a finite union of disjoint disks $E=B(z_1, r_1)\cup \cdots \cup B(z_n, r_n) \subset \mathbb{D}$ and $z_1, ..., z_n\in {\R}$, on which $f$ is also conformal. 
\end{itemize}
Given $t\in (0, 2]$, set
\begin{align}
\label{ttahti}
t^*(k)=\frac{(1-k^2)t}{1+k^2-k^2t}\in (0, 2].
\end{align}
Then
\begin{align*}
\sum_{j=1}^n \left(\left|f'(z_j)\right|r_j\right)^{t^*(k)}\geq 8^{t^*(k)} 8^{-\frac{1+k^2}{1-k^2} t^*(k)} \left(\sum_{j=1}^n r_j^t\right)^{\frac{1+k^2}{1-k^2} \frac{t^*(k)}{t}}.
\end{align*}
\end{theorem}}

\begin{proof}
The upper bound is in fact the statement of \cite[Theorem 13.3.6]{AIM}. The proof of the lower bound is similar to that of the upper bound. We only need minor modifications. For the lower bound we use the left-hand side inequality of \cite[Lemma 13.3.8]{AIM} instead of the right-hand side. This then results in the change of the exponent $t^*(k)$ compared to $t(k)$. The rest of the proof is the same as for the upper bound.
\end{proof}

Our next goal is to remove the conformality assumptions in the disks $B(z_i, r_i)$. This can be done by following a deformation argument in \cite[p. 341]{AIM} which we review.
Let $f:{\C}\to {\C}$ be an antisymmetric $K$-quasiconformal. For $\lambda\in \mathbb{D}$, set $\mu_{\lambda}=\frac{\lambda}{\alpha}\mu_f$, where $k<\alpha<1$. Then for $f_{\lambda}$, the principal solution to the Beltrami equation
\begin{align*}
\frac{\partial f_{\lambda}}{\partial\overline{z}}(z)=\mu_{\lambda}(z)\frac{\partial f_{\lambda}}{\partial z}(z) 
\end{align*}
we have $f_{\alpha}=f$. Note that $\left\|\mu_\lambda \right\|_{\infty}\leq \frac{k}{\alpha}<1$, so $f_{\lambda}$ is $\frac{1+\frac{k}{\alpha}}{1-\frac{k}{\alpha}}$-quasiconformal independent of $\lambda$. Next, take a family of disjoint disks $E=B(z_1, r_1)\cup \cdots \cup B(z_n, r_n) \subset \mathbb{D}$ such that $z_1, ..., z_n\in {\R}$. As in \cite[corollary 12.7.2]{AIM}, we deform $f_{\lambda}$ on these disk $B(z_j, r_j)$, while retaining its value outside $E$. We obtain $\widetilde{f_{\lambda}}:{\C} \to {\C}$ a $\frac{1+\left|\lambda\right|}{1-\left|\lambda\right|}$-quasiconformal mapping such that
\begin{itemize}
	\item $\widetilde{f_{\lambda}}|_{{\C}\setminus E}=f_{\lambda}$ \\
	\item $\widetilde{f_{\lambda}}(z_j+r_j)=f_{\lambda}(z_j+r_j),\  \widetilde{f_{\lambda}}(z_j)=f_{\lambda}(z_j)$ \\
	\item $\widetilde{f_{\lambda}}|_{B(z_j, \ell r_j)}$ is a similarity, where $\ell= e^{-\pi\frac{1+\frac{k}{\alpha}}{1-\frac{k}{\alpha}}}$\\
	\item $(\widetilde{f_{\lambda}})'(z_j)=\frac{f_{\lambda}(z_j+r_j)-f_{\lambda}(z_j)}{r_j}$
\end{itemize}
Applying the upper bound of Theorem \ref{AIM13.3.6vastineALAS} to this deformed family we obtain, see equation (13.50) in \cite{AIM}.

\begin{proposition} 
\label{prop:AIM13.3.6}
Let $f:{\C} \to {\C}$ be an antisymmetric $K$-quasiconformal principal mapping such that $f$ is conformal outside the unit disk $\mathbb{D}$. Let $E=B(z_1, r_1)\cup \cdots \cup B(z_n, r_n) \subset \mathbb{D}$ be a family of disjoint disks such that $z_1, ..., z_n\in {\R}$. Then 
\begin{align*}
\sum_{j=1}^n \left(\left|(\widetilde{f_{\alpha}})'(z_j)\right|r_j\right)^{t(\alpha)}\leq \left(\frac{8}{\ell}\right)^{t(\alpha)} \left(\sum_{j=1}^n r_j^t\right)^{\frac{1-{\alpha}^2}{1+{\alpha}^2} \frac{t(\alpha)}{t}}, 
\end{align*}
where $k<\alpha<1$, $\ell=e^{-\pi\frac{1+\frac{k}{\alpha}}{1-\frac{k}{\alpha}}}$ and function $t$ is as in \eqref{t}. 
\end{proposition}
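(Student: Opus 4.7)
The plan is to reduce the statement to Theorem \ref{AIM13.3.6vastineALAS} (upper bound) by applying it to the deformed map $\widetilde{f_\alpha}$ constructed in the paragraph preceding the proposition statement, using the smaller disks $B(z_j, \ell r_j)$ on which $\widetilde{f_\alpha}$ is already known to be a similarity.

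First I would verify that $\widetilde{f_\alpha}$ satisfies the hypotheses of Theorem \ref{AIM13.3.6vastineALAS} with dilatation parameter $\alpha$ in place of $k$. By construction $\widetilde{f_\alpha}$ agrees with $f_\alpha=f$ outside $E$, hence it is principal and conformal outside $\mathbb{D}$. Since the deformation of \cite[Corollary 12.7.2]{AIM} is carried out on disks centered at $z_j\in\R$ and inherits its symmetry from the original Beltrami coefficient $\mu_\alpha=\mu_f$, the deformed coefficient still satisfies the antisymmetric relation $\mu(z)=-\overline{\mu(\overline z)}$, so $\widetilde{f_\alpha}$ is antisymmetric. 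Finally, the collection $\{B(z_j,\ell r_j)\}_{j=1}^n$ is a disjoint family of disks inside $\mathbb{D}$, with centers on $\R$, on which $\widetilde{f_\alpha}$ is a similarity (and in particular conformal). Thus Theorem \ref{AIM13.3.6vastineALAS} applies to $\widetilde{f_\alpha}$ with this family of disks.

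Next I would invoke the upper bound from Theorem \ref{AIM13.3.6vastineALAS} (with $k$ replaced by $\alpha$). Since $\widetilde{f_\alpha}$ is a similarity on $B(z_j,\ell r_j)$, its derivative there is the constant $(\widetilde{f_\alpha})'(z_j)$. The theorem then yields
\begin{align*}
\sum_{j=1}^n \bigl(|(\widetilde{f_\alpha})'(z_j)|\,\ell r_j\bigr)^{t(\alpha)} \le 8^{t(\alpha)} \Bigl(\sum_{j=1}^n (\ell r_j)^t\Bigr)^{\frac{1-\alpha^2}{1+\alpha^2}\frac{t(\alpha)}{t}}.
\end{align*}
Dividing both sides by $\ell^{t(\alpha)}$ and collecting the powers of $\ell$ on the right, the net exponent of $\ell$ becomes $\left(\frac{1-\alpha^2}{1+\alpha^2}-1\right)t(\alpha)=-\frac{2\alpha^2}{1+\alpha^2}t(\alpha)$.

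Finally, since $\ell=e^{-\pi(1+k/\alpha)/(1-k/\alpha)}<1$ and $\frac{2\alpha^2}{1+\alpha^2}<1$ for $\alpha\in(0,1)$, the function $x\mapsto\ell^{-x}$ is increasing, so $\ell^{-2\alpha^2/(1+\alpha^2)\,t(\alpha)}\le \ell^{-t(\alpha)}$. This converts the constant into $(8/\ell)^{t(\alpha)}$, yielding the claimed inequality. I do not expect a serious obstacle here: the only step requiring mild care is checking that the antisymmetry is preserved under the deformation of \cite[Corollary 12.7.2]{AIM}, which follows from the fact that the disks are symmetric with respect to $\R$ and the prescribed boundary values $\widetilde{f_\alpha}(z_j)=f(z_j)$, $\widetilde{f_\alpha}(z_j+r_j)=f(z_j+r_j)$ respect this symmetry.
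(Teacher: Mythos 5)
Your proposal is correct and follows essentially the same route the paper intends: apply the upper bound of Theorem \ref{AIM13.3.6vastineALAS} to the deformed map $\widetilde{f_\alpha}$ (which is $\frac{1+\alpha}{1-\alpha}$-quasiconformal, so $\alpha$ replaces $k$) on the shrunken disks $B(z_j,\ell r_j)$ where it is conformal, then peel off the powers of $\ell$ and use $\ell<1$ together with $\frac{2\alpha^2}{1+\alpha^2}<1$ to absorb the remaining $\ell$-factor into $(8/\ell)^{t(\alpha)}$. The paper itself leaves all these steps implicit (it simply says ``Applying the upper bound of Theorem \ref{AIM13.3.6vastineALAS} to this deformed family we obtain, see equation (13.50) in \cite{AIM}''), so your write-up fills in exactly the expected details, including the mild check that the deformation preserves antisymmetry and keeps the centers on $\R$.
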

In a very similar way, using the lower bound in Theorem \ref{AIM13.3.6vastineALAS} we obtain.
\begin{proposition}
\label{prop:alas}
Let $f:{\C} \to {\C}$ be an antisymmetric $K$-quasiconformal principal mapping such that $f$ is conformal outside the unit disk $\mathbb{D}$. Let $E=B(z_1, r_1)\cup \cdots \cup B(z_n, r_n) \subset \mathbb{D}$ be a family of disjoint disks such that $z_1, ..., z_n\in {\R}$. Then 
\begin{align}
\label{prop:alas:ensimmainen kaavarivi}
\sum_{j=1}^n \left(\left|\widetilde{(f_{\alpha}})'(z_j)\right|r_j\right)^{t^*(\alpha)}\geq 8^{t^*(\alpha)} \cdot {\ell}^{-t^*(\alpha)} \cdot 8^{-\frac{1+{\alpha}^2}{1-{\alpha}^2} t^*(\alpha)} \cdot {\ell}^{\frac{1+{\alpha}^2}{1-{\alpha}^2}t^*(\alpha)}\cdot \left(\sum_{j=1}^n r_j^t\right)^{\frac{1+{\alpha}^2}{1-{\alpha}^2} \frac{t^*(\alpha)}{t}}
\end{align}
and further
\begin{align}
\label{prop:alas:toinen kaavarivi}
& \sum_{j=1}^n \left(\diam f(B(z_j, r_j))\right)^{t^*(\alpha)} \geq \nonumber \\
&8^{t^*(\alpha)} \cdot {\ell}^{-t^*(\alpha)} \cdot 8^{-\frac{1+{\alpha}^2}{1-{\alpha}^2} t^*(\alpha)} \cdot {\ell}^{\frac{1+{\alpha}^2}{1-{\alpha}^2}t^*(\alpha)}\cdot \left(\sum_{j=1}^n r_j^t\right)^{\frac{1+{\alpha}^2}{1-{\alpha}^2} \frac{t^*(\alpha)}{t}},
\end{align}
where $k<\alpha<1$, $\ell=e^{-\pi\frac{1+\frac{k}{\alpha}}{1-\frac{k}{\alpha}}}$ and function $t^*$ is as in \eqref{t}. 
\end{proposition}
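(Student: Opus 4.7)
The plan is to mirror the proof of Proposition \ref{prop:AIM13.3.6} almost verbatim, but invoking the lower bound of Theorem \ref{AIM13.3.6vastineALAS} in place of its upper bound. For each $\alpha \in (k,1)$ the deformation construction recalled just above produces the auxiliary map $\widetilde{f_{\alpha}}$, which is $\frac{1+\alpha}{1-\alpha}$-quasiconformal, a principal mapping conformal outside $\mathbb{D}$, and acts as a similarity on each smaller disk $B(z_j,\ell r_j)$. Because the original disks are centred on $\R$ and $f$ is antisymmetric, the similarity on each $B(z_j,\ell r_j)$ can be chosen to respect the $z \mapsto \overline{z}$ symmetry, so $\widetilde{f_{\alpha}}$ is antisymmetric as well; the derivative relation $(\widetilde{f_{\alpha}})'(z_j) = (f(z_j+r_j)-f(z_j))/r_j$ is preserved throughout.

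The first step is to apply the lower bound of Theorem \ref{AIM13.3.6vastineALAS} (with parameter $\alpha$ in place of $k$) to $\widetilde{f_{\alpha}}$ together with the family of conformal disks $\{B(z_j,\ell r_j)\}_{j=1}^n$. This yields directly
\begin{align*}
\sum_{j=1}^n \bigl(\lvert(\widetilde{f_{\alpha}})'(z_j)\rvert\,\ell r_j\bigr)^{t^*(\alpha)} \geq 8^{t^*(\alpha)}\cdot 8^{-\frac{1+\alpha^2}{1-\alpha^2}t^*(\alpha)} \left(\sum_{j=1}^n(\ell r_j)^t\right)^{\frac{1+\alpha^2}{1-\alpha^2}\frac{t^*(\alpha)}{t}}.
\end{align*}
Pulling $\ell^{t^*(\alpha)}$ out of the left-hand side and $\ell^{\frac{1+\alpha^2}{1-\alpha^2}t^*(\alpha)}$ out of the right-hand side (using $t\cdot\frac{t^*(\alpha)}{t} = t^*(\alpha)$ in the exponent) and dividing gives exactly \eqref{prop:alas:ensimmainen kaavarivi}.

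For \eqref{prop:alas:toinen kaavarivi} I would use the identity $\lvert(\widetilde{f_{\alpha}})'(z_j)\rvert\,r_j = \lvert f(z_j+r_j)-f(z_j)\rvert$ together with the fact that $z_j$ and $z_j+r_j$ both lie in $\overline{B(z_j,r_j)}$, so that $\lvert f(z_j+r_j)-f(z_j)\rvert \leq \diam f(B(z_j,r_j))$. Since $t^*(\alpha)>0$, replacing each term $(\lvert(\widetilde{f_{\alpha}})'(z_j)\rvert\,r_j)^{t^*(\alpha)}$ on the left-hand side of \eqref{prop:alas:ensimmainen kaavarivi} by the larger quantity $(\diam f(B(z_j,r_j)))^{t^*(\alpha)}$ preserves the inequality and delivers \eqref{prop:alas:toinen kaavarivi}.

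The only real obstacle is bookkeeping: checking that the deformation procedure of \cite[Corollary 12.7.2]{AIM} can be arranged to keep $\widetilde{f_{\alpha}}$ antisymmetric (so that Theorem \ref{AIM13.3.6vastineALAS} applies), and tracking the exponents of $\ell$ carefully so that the two sets of prefactors in \eqref{prop:alas:ensimmainen kaavarivi} come out correctly after the rearrangement. Neither step is deep; both merely reprise the arithmetic already performed in the upper-bound case of Proposition \ref{prop:AIM13.3.6}.
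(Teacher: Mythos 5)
Your proposal is correct and matches the paper's argument: the paper likewise obtains \eqref{prop:alas:ensimmainen kaavarivi} by applying the lower bound of Theorem \ref{AIM13.3.6vastineALAS} to the deformed map $\widetilde{f_{\alpha}}$ on the conformal disks $B(z_j,\ell r_j)$ (the $\ell$-power bookkeeping you carry out is exactly what produces the stated prefactors, which the paper leaves implicit), and then derives \eqref{prop:alas:toinen kaavarivi} from the identity $\lvert(\widetilde{f_{\alpha}})'(z_j)\rvert r_j = \lvert f(z_j+r_j)-f(z_j)\rvert \leq \diam f(B(z_j,r_j))$. Your explicit remark about arranging the deformation of \cite[Corollary 12.7.2]{AIM} so that $\widetilde{f_{\alpha}}$ remains antisymmetric is a point the paper does not spell out but which is indeed required before Theorem \ref{AIM13.3.6vastineALAS} can be invoked.
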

\begin{proof}
The claim \eqref{prop:alas:ensimmainen kaavarivi} follows by applying the lower bound of Theorem \ref{AIM13.3.6vastineALAS} to deformed family $\widetilde{f_{\lambda}}$. The second claim follows by combining inequality
\begin{align*}
\diam f(B(z_j, r_j))\geq \left|f(z_j+r_j)-f(z_j)\right|=\left|(\widetilde{f_{\alpha}})'(z_j)\right|r_j
\end{align*}
and inequality \eqref{prop:alas:ensimmainen kaavarivi}.
\end{proof}
\old{We will need the following basic distortion inequality in our setup.
\begin{lemma}
\label{oletuksenaKquasiconformal}
Let $f:{\C} \to {\C}$ be $K=\frac{1+k}{1-k}$-quasiconformal (principal mapping such that $f$ is conformal outside the unit disk $\mathbb{D}$). Then
\begin{align}
\diam f(B(z_j, r_j)) \geq e^{-\frac{1+k}{1-k}} \left|(\widetilde{f_{\alpha}})'(z_j)\right|r_j.
\end{align}
\end{lemma}
\begin{proof}
\begin {align*}
\diam f(B(z_j, r_j)) &\geq \min_{\theta \in [0, 2\pi)} \left|f(z_j+r_je^{i\theta})-f(z_j)\right| \geq e^{-\pi\frac{1+k}{1-k}} \max_{\theta\in [0, 2\pi)} \left|f(z_j+r_je^{i\theta})-f(z_j)\right|\\
&\geq e^{-\pi\frac{1+k}{1-k}} \left|f(z_j+r_j)-f(z_j)\right|=e^{-\pi\frac{1+k}{1-k}} \left|(\widetilde{f_{\alpha}})'(z_j)\right|r_j,
\end{align*}
where the middle inequality follows from 	\cite[Theorem 12.6.2]{AIM}.
\end{proof}
Note that, if $k<\alpha<1$, then further
\begin{align*}
\diam f(B(z_j, r_j)) \geq e^{-\pi\frac{1+k}{1-k}} \left|(\widetilde{f_{\alpha}})'(z_j)\right|r_j\geq e^{-\pi\frac{1+\alpha}{1-\alpha}} \left|(\widetilde{f_{\alpha}})'(z_j)\right|r_j.
\end{align*}}
\old{Proposition \ref{prop:alas} leads to the following theorem.
\begin{theorem}
\label{alaspainarvioimiseentarvittavalause}
Let $f:{\C} \to {\C}$ be an antisymmetric $K=\frac{1+k}{1-k}$-quasiconformal principal mapping such that $f$ is conformal outside the unit disk $\mathbb{D}$. Let $E=B(z_1, r_1)\cup \cdots \cup B(z_n, r_n) \subset \mathbb{D}$ be a family of disjoint disks such that $z_1, ..., z_n\in {\R}$. Then 
\begin{align*}
\sum_{j=1}^n \left(\diam f(B(z_j, r_j))\right)^{t^*(\alpha)}\geq e^{-\pi \frac{1+\alpha}{1-\alpha}t^*(\alpha)} 8^{t^*(\alpha)} \cdot {\ell}^{-t^*(\alpha)} \cdot 8^{-\frac{1+{\alpha}^2}{1-{\alpha}^2} t^*(\alpha)} \cdot {\ell}^{\frac{1+{\alpha}^2}{1-{\alpha}^2}t^*(\alpha)}\cdot \left(\sum_{j=1}^n r_j^t\right)^{\frac{1+{\alpha}^2}{1-{\alpha}^2} \frac{t^*(\alpha)}{t}},
\end{align*}
where $k<\alpha<1$, $\ell=e^{-\pi\frac{1+\frac{k}{\alpha}}{1-\frac{k}{\alpha}}}$ and function $t^*$ is as in \eqref{t}. 
\end{theorem}

\begin{proof}
We have the following estimate from below
\begin {align*}
\diam f(B(z_j, r_j)) &\geq \min_{\theta \in [0, 2\pi)} \left|f(z_j+r_je^{i\theta})-f(z_j)\right| \geq e^{-\pi\frac{1+k}{1-k}} \max_{\theta\in [0, 2\pi)} \left|f(z_j+r_je^{i\theta})-f(z_j)\right|\\
&\geq e^{-\pi\frac{1+k}{1-k}} \left|f(z_j+r_j)-f(z_j)\right|=e^{-\pi\frac{1+k}{1-k}} \left|(\widetilde{f_{\alpha}})'(z_j)\right|r_j,
\end{align*}
{\color{blue} rewrite this directly and maybe combine Prop 3.5 and Theorem 3.6.}
where the middle inequality follows from 	\cite[Theorem 12.6.2]{AIM}. Since $k<\alpha<1$, then further 
\begin{align}
\label{aputulos: alaspain arvio}
\diam f(B(z_j, r_j)) \geq e^{-\pi\frac{1+k}{1-k}} \left|(\widetilde{f_{\alpha}})'(z_j)\right|r_j\geq e^{-\pi\frac{1+\alpha}{1-\alpha}} \left|(\widetilde{f_{\alpha}})'(z_j)\right|r_j.
\end{align}
The claim follows by combining Proposition \ref{prop:alas} and \eqref{aputulos: alaspain arvio}.
\end{proof}}

With these preparations we are ready to prove the lower estimate for Hausdorff dimension.

\begin{theorem}
\label{downward}
Let $E\subset {\R}$ be a set with $0<t:=\dimh (E)\leq1$. Suppose that $f:{\C} \to {\C}$ is antisymmetric $K$-quasiconformal  map.  Then $\dimh (f(E))\geq t^*(k)$, with $t^*(k)$ defined in \eqref{t}.
\end{theorem}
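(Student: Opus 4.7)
The proof proceeds by contradiction, following the pullback strategy of \cite[Ch.~13]{AIM} with Proposition~\ref{prop:alas} as the crucial distortion input. The plan is to transfer an efficient cover of $f(E)$ back to a cover of $E$ by disks centred on $\R$, and then to use Proposition~\ref{prop:alas} to force $\HH^s(E)=0$ for some $s<\dimh(E)$, the desired contradiction.

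First I reduce to a convenient normalisation. By countable stability of Hausdorff dimension I may assume $E\subset [-1/2,1/2]$. Truncating the Beltrami coefficient $\mu_f$ to a large disk $B(0,R)\supset E$ preserves antisymmetry since $B(0,R)$ is invariant under $z\mapsto\bar z$, and after a real affine rescaling and removal of a constant at infinity the resulting $f$ is a principal antisymmetric $K$-quasiconformal map, conformal outside $\mathbb{D}$. Fix $s\in(0,t)$ so that $\HH^s(E)=\infty$, and fix $\alpha\in(k,1)$. Since $s^*(\alpha)\to s^*(k)$ as $\alpha\downarrow k$ and $s^*(k)\to t^*(k)$ as $s\uparrow t$, it is enough to show $\dimh(f(E))\geq s^*(\alpha)$.

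Suppose for contradiction that $\dimh(f(E))<s^*(\alpha)$. For any prescribed $\delta,\eps>0$ there is then a $\delta$-cover $\{B(w_j,\rho_j)\}$ of $f(E)$ with $\sum_j\rho_j^{s^*(\alpha)}<\eps$. For each $j$ pick $p_j\in B(w_j,\rho_j)\cap f(E)$, set $z_j:=f^{-1}(p_j)\in E\subset\R$, and let $r_j:=\sup_{w\in B(w_j,\rho_j)}|f^{-1}(w)-z_j|$, so that $B(z_j,r_j)\supset f^{-1}(B(w_j,\rho_j))$ and hence $\bigcup_j B(z_j,r_j)\supset E$. The $1/K$-Hölder continuity of $f^{-1}$ gives $r_j\leq C_1\rho_j^{1/K}$, so for $\delta$ small enough we have $r_j<1/2$ and $B(z_j,r_j)\subset\mathbb{D}$. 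By construction some boundary point of $B(z_j,r_j)$ maps under $f$ to within $2\rho_j$ of $p_j$; the uniform quasisymmetry of $f$ (secured by the principal plus conformal-outside-$\mathbb{D}$ normalisation) then yields
\[ \diam f(B(z_j,r_j))\leq 4H(K)\,\rho_j. \]
Using the $5r$-covering lemma I extract a disjoint subfamily $\{B(z_{j_i},r_{j_i})\}$ whose $5r$-enlargements still cover $E$. This disjoint family, centred on $\R$ and contained in $\mathbb{D}$, satisfies the hypotheses of Proposition~\ref{prop:alas} applied with parameter $s$, and combining with the diameter bound above yields
\[ C_2(\alpha,k)\Bigl(\sum_i r_{j_i}^s\Bigr)^{\frac{1+\alpha^2}{1-\alpha^2}\cdot\frac{s^*(\alpha)}{s}}\leq \sum_i\bigl(\diam f(B(z_{j_i},r_{j_i}))\bigr)^{s^*(\alpha)}\leq (4H)^{s^*(\alpha)}\eps. \]
Letting $\eps,\delta\downarrow 0$ forces $\sum_i r_{j_i}^s\to 0$ while simultaneously $5r_{j_i}\to 0$, so $\HH^s(E)=0$, contradicting $s<t=\dimh(E)$.

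The most delicate step is the diameter estimate $\diam f(B(z_j,r_j))\lesssim\rho_j$: the quasisymmetry constant must be independent of $j$ and of the scale, which only follows once $f$ is principal and conformal off a fixed disk. Consequently the reduction to this normalisation while preserving antisymmetry is not cosmetic but carries genuine analytic content; everything else is a standard covering argument coupled to Proposition~\ref{prop:alas}.
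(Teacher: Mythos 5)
Your proposal is correct and follows essentially the same route as the paper: reduce to a principal antisymmetric map by truncating $\mu_f$ to $\mathbb{D}$, pull a cover of $f(E)$ back to disks centred on $\R$ inside $\mathbb{D}$, extract a disjoint subfamily by the $5r$-covering lemma, apply Proposition~\ref{prop:alas}, and pass to the limits $\alpha\downarrow k$ and $s\uparrow t$. The paper argues directly (bounding $\mathcal{B}^{(t-\epsilon)^*(\alpha)}(f(E))$ from below) rather than by contradiction, and uses inscribed together with circumscribed pullback balls with the quasisymmetric ratio $1\leq r_{b_i}/r_{s_i}\leq C(K)$ in place of your circumscribed-ball diameter bound $\diam f(B(z_j,r_j))\leq 4H(K)\rho_j$ -- cosmetic variants of the same estimate; one small correction is that the uniform quasisymmetry constant is a consequence of $K$-quasiconformality of a global self-map of $\C$ alone (the class is scale-invariant), so the principal normalization is required in order to invoke Proposition~\ref{prop:alas} and its underlying deformation argument, not in order to make $H(K)$ uniform.
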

\begin{proof}
As the set $E$ can be covered by countably many sets of the form $E \cap [n/2,n/2+1]$, $n \in \mathbb{Z}$, without loss of generality we may assume that $E\subset [-\frac{1}{2}, \frac{1}{2}]$ with $0<t:=\dimh (E)\leq1$. We may also further assume that $f$ is a principal mapping. Indeed by setting the Beltrami coefficient $\mu_{\tilde f}:=\mu_f \cdot \chi_{\mathbb{D}}$, we have a decomposition $f=h \circ \tilde f$, where $\tilde f$ is a $K$-quasiconformal principal mapping and $h \colon \C \to \C$ is a $K$-quasiconformal map which is conformal on $\tilde f(\mathbb{D})$. Since $h$ is conformal on $\tilde f(\mathbb{D})$ the Hausdorff dimension is unaffected.

Let $\epsilon>0$ such that $t-\epsilon>0$. Then $\mathcal{H}^{t-\epsilon}(E)=\infty$. Hence there exist $M\geq 2^{t-\epsilon}>0$ such that for all $0<\delta\leq \delta_M$ holds 
\begin{align}
\label{deltaM}
\mathcal{H}_{\delta}^{t-\epsilon}(E)\geq M. 
\end{align}
Let $0<\delta<\delta'$, where we will choose $\delta'$ to be suitably small. 
Further, let $B_1:=B(x_1, r_1), ..., B_n:=B(x_n, r_n)$ be  a collection of balls such that $\diam B_i\leq \delta$ for all $i\in\{1, ...,n\}$ and $f(E)\subset \cup_{i=1}^n B_i$. We can assume that the centers $x_i$ of the balls $B_i$ belong to $f(E)$ for all $i\in\{1, ..., n\}$, see Remark \ref{huomatusjohonviitataan}. Then $f^{-1}(x_i)\in E\subset [-\frac{1}{2}, \frac{1}{2}]$ for all $i\in\{1, ..., n\}$. Note that sets $f^{-1}(B_1), ..., f^{-1}(B_n)$ covers to set $E$. We define (for all $i\in\{1, ...,n\}$) two balls: an circumscribed and inscribed ball. Let $B_i^{'}=\overline{B}(f^{-1}(x_i), r_{b_i})$ be the smallest closed ball centered at $f^{-1}(x_i)$ which contains $f^{-1}(B_i)$ and $B_i^{''}=B(f^{-1}(x_i), r_{s_i})$ be the biggest open ball centered at $f^{-1}(x_i)$ which is contained in $f^{-1}(B_i)$. By the well-known quasisymmetry property of $f$ there exist constant (depending only on $K$) $C(K)$ such that 
\begin{align}
\label{CK}
1\leq \frac{r_{b_i}}{r_{s_i}}\leq C(K)
\end{align}
for all $i\in\{1, ...,n\}$. Observe, that the balls  $B_1^{'}, ..., B_n^{'}$ cover the set $E$ and their diameter is uniformly small depending on $\delta'$, since $f$ and $f^{-1}$ is H\"older continuous \cite[Corollary 3.10.3]{AIM}. We now choose $\delta'$ small enough such that the diameters above are smaller than $\frac{1}{5}\delta_M$. By the 5r-covering theorem there exist a subcollection of $B_1^{'}, ..., B_n^{'}$, say $B_{j_1}^{'}, ... B_{j_{n'}}^{'}$, such that balls in this subcollection are pairwise disjoint and
\begin{align*}
E\subset \cup_{i=1}^{n'} 5B_{j_i}^{'}.
\end{align*}
The balls $\{5B_{j_i}^{'}\}_{i=1}^{n'}$ form a $\delta_M$-cover of $E$. Thus by \eqref{deltaM} 
\begin{align}
\label{vahintaan}
\sum_{i=1}^{n'} {\left(\diam 5B_{j_i}'\right)}^{t-\epsilon}\geq M\geq 2^{t-\epsilon}.
\end{align}
We can write
\begin{align}
\label{yhtasuuri}
\sum_{i=1}^{n'} {\left(\diam 5B_{j_i}'\right)}^{t-\epsilon}=\sum_{i=1}^{n'} (2\cdot 5r_{b_{j_i}})^{t-\epsilon}=10^{t-\epsilon} \sum_{i=1}^{n'} (r_{b_{j_i}})^{t-\epsilon}
\end{align}
By combining \eqref{vahintaan} and \eqref{yhtasuuri} we obtain
\begin{align*}
10^{t-\epsilon} \sum_{i=1}^{n'} (r_{b_{j_i}})^{t-\epsilon}\geq 2^{t-\epsilon}.
\end{align*}
This can be written in forms
\begin{align}
\label{estimate15}
\sum_{i=1}^{n'} (r_{b_{j_i}})^{t-\epsilon}\geq \left(\frac{1}{5}\right)^{t-\epsilon}.
\end{align}
By combining \eqref{CK} and \eqref{estimate15} gives
\begin{align}
\label{smallballsvsbigballs}
\sum_{i=1}^{n'} (r_{s_{j_i}})^{t-\epsilon}\geq \sum_{i=1}^{n'} \left(r_{b_{j_i}}\frac{1}{C(K)}\right)^{t-\epsilon}=\left(\frac{1}{C(K)}\right)^{t-\epsilon} \sum_{i=1}^{n'} \left(r_{b_{j_i}}\right)^{t-\epsilon}\geq \left(\frac{1}{C(K)}\right)^{t-\epsilon} \left(\frac{1}{5}\right)^{t-\epsilon}.
\end{align}
Let $0\leq k<\alpha<1$ and $\ell=e^{-\pi \frac{1+k/\alpha}{1-k/\alpha}}$ as in the statement of Proposition \ref{prop:alas}. We obtain
\begin{align}
\label{pitkakaavarivi}
&\sum_{i=1}^n {\left( \diam B_i\right)}^{(t-\epsilon)^*(\alpha)}=\sum_{i=1}^n {\left( \diam f(f^{-1}(B_i))\right)}^{(t-\epsilon)^*(\alpha)}\nonumber\\
&\geq \sum_{i=1}^n {\left( \diam f(B_i^{''})\right)}^{(t-\epsilon)^*(\alpha)}\geq \sum_{i=1}^{n'} {\left( \diam f(B_{j_i}^{''})\right)}^{(t-\epsilon)^*(\alpha)}\nonumber\\
&\geq 8^{(t-\epsilon)^*(\alpha)} \cdot {\ell}^{-(t-\epsilon)^*(\alpha)}\cdot 8^{-\frac{1+{\alpha}^2}{1-{\alpha}^2}(t-\epsilon)^*(\alpha)}\cdot {\ell}^{\frac{1+{\alpha}^2}{1-{\alpha}^2}(t-\epsilon)^*(\alpha)}\cdot \left(\sum_{i=1}^{n'} (r_{s_{j_i}})^{t-\epsilon}\right)^{\frac{1+{\alpha}^2}{1-{\alpha}^2}\frac{(t-\epsilon)^*(\alpha)}{t-\epsilon}}\nonumber\\
&\geq 8^{(t-\epsilon)^*(\alpha)} \cdot {\ell}^{-(t-\epsilon)^*(\alpha)}\cdot 8^{-\frac{1+{\alpha}^2}{1-{\alpha}^2}(t-\epsilon)^*(\alpha)}\cdot {\ell}^{\frac{1+{\alpha}^2}{1-{\alpha}^2}(t-\epsilon)^*(\alpha)}\\
&\cdot\left(\left(\frac{1}{C(K)}\right)^{t-\epsilon} \left(\frac{1}{5}\right)^{t-\epsilon}\right)^{\frac{1+{\alpha}^2}{1-{\alpha}^2}\frac{(t-\epsilon)^*(\alpha)}{t-\epsilon}}:=D(t, \epsilon,\alpha, K)>0,\nonumber
\end{align}
where the third inequality follows from \eqref{prop:alas:toinen kaavarivi} of Proposition \ref{prop:alas} and the last inequality follows from \eqref{smallballsvsbigballs}.
\old{{\tiny{Arvio (**) seuraa yhdistamalla arvioiden AIM (13.50) ja AIM (13.51) vastineet alaspain: \eqref{AIM1350*} ja \eqref{AIM1351*}, respectively. Kayttamalla AIM Lemman 13.3.8 oikeanpuoleista arviota ja menettelemalla kuten AIM sivut 340-342 (with $t(\alpha)$) saadaan arvio (AIM 13.50). Kayttamalla AIM Lemman 13.3.8 vasemmanpuoleista arviota, kayttamalla  $t(\alpha)$:n tilalla $t^*(\alpha)$:ta  ja menettelemalla muuten vastaavasti kuin AIM sivut 340-342, saadaan arvio (AIM merkinnoin) 
\begin{align}
\label{AIM1350*}
\sum_{j=1}^n \left(\left|(\widetilde{f}_{\alpha})'(z_j)\right|r_j\right)^{t^*(\alpha)}\geq 8^{t^*(\alpha)} \cdot {\ell}^{-t^*(\alpha)} \cdot 8^{-\frac{1+{\alpha}^2}{1-{\alpha}^2}t^*(\alpha)} \cdot {\ell}^{\frac{1+{\alpha}^2}{1-{\alpha}^2}t^*(\alpha)}\cdot \left(\sum_{j=1}^n r_j^t\right)^{\frac{1+{\alpha}^2}{1-{\alpha}^2}\frac{t^*(\alpha)}{t}}.
\end{align}
Arvion AIM (13.51) vastineeksi alaspain saadaan (AIM merkinnoin)
\begin{align}
\label{AIM1351*}
\left|(f(D(z_j, r_j))\right|&\geq \min_{\theta\in [0, 2\pi)} \left\{\left|f(z_j+r_je^{i\theta}-f(z_j)\right|\right\}\nonumber\\
&\overbrace{\geq}^{AIM Theorem 12.6.2} e^{-\pi \frac{1+\alpha}{1-\alpha}} \max_{\theta\in [0, 2\pi)} \left\{\left|f(z_j+r_je^{i\theta}-f(z_j)\right|\right\}\nonumber\\
&\geq e^{-\pi \frac{1+\alpha}{1-\alpha}} \left|f(z_j+r_j)-f(z_j)\right|\nonumber\\
&=e^{-\pi \frac{1+\alpha}{1-\alpha}} \left|(\widetilde{f}_{\alpha})'(z_j)\right|r_j.
\end{align}
Yhdistamalla \eqref{AIM1351*} ja \eqref{AIM1350*} saadaan (AIM merkinnoin)
\begin{align*}
\sum_{j=1}^n \left|(f(D(z_j, r_j))\right|^{t^*(\alpha)}\geq e^{-\pi \frac{1+\alpha}{1-\alpha}t^*(\alpha)} \cdot 8^{t^*(\alpha)} \cdot {\ell}^{-t^*(\alpha)} \cdot 8^{-\frac{1+{\alpha}^2}{1-{\alpha}^2}t^*(\alpha)} \cdot {\ell}^{\frac{1+{\alpha}^2}{1-{\alpha}^2}t^*(\alpha)}\cdot \left(\sum_{j=1}^n r_j^t\right)^{\frac{1+{\alpha}^2}{1-{\alpha}^2}\frac{t^*(\alpha)}{t}}
\end{align*}}}}
Thus by \eqref{pitkakaavarivi}
\begin{align}
\label{rajankaynti}
{\mathcal{B}}^{(t-\epsilon)^*(\alpha)}(f(E))=\lim_{\delta \downarrow 0} {{\mathcal{B}}_{\delta}}^{(t-\epsilon)^*(\alpha)} (f(E))\geq \lim_{\delta \downarrow 0} D(t, \epsilon, \alpha, K)=D(t, \epsilon, \alpha, K)>0.
\end{align}
Further by \eqref{pallopeitteetvrtkaikki peitteet}
\begin{align}
\mathcal{H}^{(t-\epsilon)^*(\alpha)}(f(E))\geq \frac{{\mathcal{B}}^{(t-\epsilon)^*(\alpha)}(f(E))}{2^{(t-\epsilon)^*(\alpha)}}\geq \frac{D(t, \epsilon, \alpha, K)}{2^{(t-\epsilon)^*(\alpha)}}>0.
\end{align}
Hence
\begin{align*}
\dimh f(E)\geq (t-\epsilon)^*(\alpha).
\end{align*}
Letting $\epsilon \downarrow 0$, we have $\dimh f(E) \geq t^*(\alpha)$.
Finally, we let $\alpha$ tend to $k$ to obtain
\begin{align*}
\dimh f(E) \geq \lim_{\alpha \downarrow k} t^*(\alpha)=t^*(k).
\end{align*}
\end{proof}
\old{\begin{remark}
\label{huomatusjohonviitataan}
Optimaalisuuden kannalta voidaan rajoittua tarkastelemaan sellaisia joukon $f(E)$ $\delta$-peitteita, joiden kaikki pallot leikkaavat joukkoa $f(E)$. We can assume that pallojen $B_i$ keskipisteet $x_i$ kuuluvat joukkoon $f(E)$ for all $i\in\{1, ..., n\}$. Note that then $f^{-1}(x_i)\in E\subset [-\frac{1}{2}, \frac{1}{2}]$ for all $i\in\{1, ..., n\}$. Indeed, if there exist some ball $B_i$ such that its center point $x_i$ ei kuulu joukkoon $f(E)$, then we proceed as follows. Valitaan alkuperaisen pallon $B_i$ sisalta jokin joukkoon $f(E)$ kuuluva piste ja piirretaan se keskipisteena $\left|B_i\right|$-sateinen ympyra (halkaisija $2\left|B_i\right|$). Talla tavalla voidaan tuplata kaikkien pallojen halkaisijat. Valitsemalla $\delta'$ riittavan pieneksi uusille (edella kerrotulla tavalla saaduille) tuplatuille palloille $B_i^*$ patee \eqref{smallballsvsbigballs} (tarkemmin ottaen an circumscribed and inscribed ball corresponding to $B_i^*$ satisfies \eqref{smallballsvsbigballs}) ja kayttamalla paattelya \eqref{pitkakaavarivi}  uusille tuplatuille palloille $B_i^*$ saadaan
\begin{align*}
\sum_{i=1}^n \left|B_i\right|^{(t-\epsilon)^*(\alpha)}=\frac{1}{2^{(t-\epsilon)^*(\alpha)}}\sum_{i=1}^n \left(\left|B_i^*\right|\right)^{(t-\epsilon)^*(\alpha)}\geq...\geq \frac{1}{2^{(t-\epsilon)^*(\alpha)}}D(t, \epsilon, \alpha, K)>0.
\end{align*} 
We can assume that center points $x_i$ of balls $B_i$ are elements of $f(E)$ for all $i\in\{1, ..., n\}$. 
$x_i\notin f(E)$
Let's choose a point from the set $B_i\cap f(E)$ and draw $\left|B_i\right|$-radius circle with this point as the center. Let this new $2\left|B_i\right|$-diameter ball be $B_i^*$. 
In this way, the diameters of all balls can be doubled. If we choose $\delta'$ small enough, then an circumscribed and inscribed ball corresponding to $B_i^*$ satisfies \eqref{smallballsvsbigballs}) and we obtain for the balls $B_i^*$

\end{remark}}
\begin{remark}
\label{huomatusjohonviitataan}
In the argument we are interested in optimal $\delta$-cover of the set $f(E)$ by balls. Thus we can consider such $\delta$-covers of the set $f(E)$, where all balls intersect the set $f(E)$. As in proof of Theorem \ref{downward}, let $B_1:=B(x_1, r_1), ..., B_n:=B(x_n, r_n)$ be a collection of balls such that $\diam B_i\leq \delta$ for all $i\in\{1, ...,n\}$, $f(E)\cap B_i\neq \emptyset$ for all $i\in\{1, ...,n\}$ and $f(E)\subset \cup_{i=1}^n B_i$. We can assume that center points $x_i$ of balls $B_i$ are elements of $f(E)$ for all $i\in\{1, ..., n\}$. Indeed, if there exist some ball $B_i$ such that its center point $x_i\notin f(E)$, then we proceed as follows. We choose a point from the set $B_i\cap f(E)$ and draw $\diam B_i$-radius circle with this point as the center. Let this new $2\diam B_i$-diameter ball be $B_i^*$. In this way, the diameters of all balls can be doubled. If we choose $\delta'$ small enough, then an circumscribed and inscribed ball corresponding to $B_i^*$ satisfies \eqref{smallballsvsbigballs}) and we obtain for the balls $B_i^*$
\begin{align*}
&\sum_{i=1}^n {\left( \diam B_i\right)}^{(t-\epsilon)^*(\alpha)}=\frac{1}{2^{(t-\epsilon)^*(\alpha)}}\sum_{i=1}^n {\left( \diam B_i^*\right)}^{(t-\epsilon)^*(\alpha)}\\
&\geq...\geq \frac{1}{2^{(t-\epsilon)^*(\alpha)}}D(t, \epsilon, \alpha, K)>0.
\end{align*} 

\end{remark}

\section{Improved dimension estimates on the line}
\label{se:improved}

Let us recall that the bounds of Astala in \eqref{Astalasestimate} are sharp for subsets $E \subset \C$, see \cite{Astala}. Here we show that these bounds can be improved if we consider subsets of the line $E \subset \R$.

These improved estimates will be based on the decomposition property mentioned in Section 2. The precise statement is as follows, see Section 13.3.1 of \cite{AIM}. 

\begin{proposition}
A $K$-quasiconformal map $f^*:{\C}\to {\C}$ can be written $f^*=\widetilde{f}\circ h$, where $\widetilde{f}:{\C}\to {\C}$ is \emph{antisymmetric} $K$-quasiconformal  and $h:{\C}\to {\C}$ is a \emph{symmetric} $K$-quasiconformal map.
\end{proposition}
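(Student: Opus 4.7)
The plan is to construct $h$ by choosing its Beltrami coefficient pointwise as the \emph{hyperbolic midpoint} of $\mu(z):=\mu_{f^*}(z)$ and the ``conjugate reflection'' $\mu^\sharp(z):=\overline{\mu(\bar z)}$, and then to set $\widetilde{f}:=f^*\circ h^{-1}$. The whole construction is engineered so that the midpoint choice cancels exactly what needs to cancel for $\widetilde{f}$ to become antisymmetric.

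In detail, for a.e.\ $z\in\C$ let $\nu(z)$ be the unique $c\in\mathbb{D}$ satisfying
\begin{equation*}
\frac{\mu(z)-c}{1-\bar c\,\mu(z)}=-\,\frac{\mu^\sharp(z)-c}{1-\bar c\,\mu^\sharp(z)}.
\end{equation*}
Both $\mu(z)$ and $\mu^\sharp(z)$ lie in the closed disk $\{|w|\leqslant k\}$, which is hyperbolically convex, so $\|\nu\|_\infty\leqslant k$. The involution $z\mapsto\bar z$ takes the ordered pair $(\mu(z),\mu^\sharp(z))$ to the pointwise complex conjugate of $(\mu^\sharp(z),\mu(z))$; combined with uniqueness of the hyperbolic midpoint, this gives $\nu(\bar z)=\overline{\nu(z)}$ a.e.

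Let $h$ be the $K$-quasiconformal solution of the Beltrami equation with $\mu_h=\nu$, normalized to fix $0$, $1$, and $\infty$. The symmetry $\nu(\bar z)=\overline{\nu(z)}$, together with uniqueness in the measurable Riemann mapping theorem applied to $z\mapsto\overline{h(\bar z)}$, forces $h(\bar z)=\overline{h(z)}$, so $h$ is symmetric in the sense of Section \ref{se:pre}. Define $\widetilde{f}:=f^*\circ h^{-1}$. The chain rule for Beltrami coefficients gives
\begin{equation*}
\mu_{\widetilde{f}}(h(z))=\theta(z)\,\frac{\mu(z)-\nu(z)}{1-\overline{\nu(z)}\,\mu(z)},
\end{equation*}
for some unimodular factor $\theta(z)$. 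A short hyperbolic-distance estimate (the midpoint of any two points of $\{|w|\leqslant k\}$ lies at hyperbolic distance at most $d_{\mathrm{hyp}}(0,k)$ from each endpoint) gives $\|\mu_{\widetilde{f}}\|_\infty\leqslant k$, so $\widetilde{f}$ is $K$-quasiconformal.

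The main obstacle, and the whole reason for the midpoint construction, is the verification that $\widetilde{f}$ is antisymmetric, i.e.\ $\mu_{\widetilde{f}}(w)=-\overline{\mu_{\widetilde{f}}(\bar w)}$ for a.e.\ $w$. Substituting $w=h(z)$ (so that $\bar w=h(\bar z)$ by symmetry of $h$), and using the corresponding transformation rule for $\theta$ under $z\mapsto\bar z$ together with $\nu(\bar z)=\overline{\nu(z)}$, the required identity collapses, after multiplying out the Möbius-type fractions, to precisely the defining equation of $\nu(z)$ as hyperbolic midpoint of $\mu(z)$ and $\mu^\sharp(z)$. This is exactly the cancellation that motivated the choice of $\nu$.
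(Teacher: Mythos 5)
Your construction is correct, and it is essentially the standard one: the paper itself does not reproduce a proof of this proposition but only cites \cite[\S 13.3.1]{AIM} (the idea going back to \cite{Smi} and \cite{kuhnau}), where the Beltrami coefficient of the symmetric factor is taken as the hyperbolic midpoint of $\mu(z)$ and $\overline{\mu(\bar z)}$, exactly as you do. Your verification of the symmetry $\nu(\bar z)=\overline{\nu(z)}$, of the resulting symmetry of $h$, of the bound $\|\mu_{\widetilde f}\|_\infty\leqslant k$, and of the cancellation of the unimodular factors $h_z/\overline{h_z}$ in the antisymmetry identity are all sound.
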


In Section 3 we established dimension distortion estimates for antisymmetric maps. For symmetric quasiconformal maps we will use Theorem 1.1 of \cite{PrSm} which gives bounds in terms of the function 
\[ \Delta(\dimh(E), k)=\frac{\dimh(E)(1-k^2)}{(1+k\sqrt{1-\dimh(E)})^2}.
\]

We can now combine these two type of estimates. Let $E \subset \R$, note that then also $h(E)\subset {\R}$. We start with the estimate from below. Combining Theorem \ref{downward} and \cite[Theorem 1.1]{PrSm}, we obtain
\begin{align}
\label{destimate}
\dimh(f^*(E))&= \dimh(\widetilde{f}(h(E)))\geq \frac{(1-k^2)\dimh(h(E))}{1+k^2-k^2\dimh(h(E))}\nonumber\\
&\geq \frac{(1-k^2)\Delta(\dimh(E), k)}{1+k^2-k^2\Delta(\dimh(E), k))}\nonumber\\
&=\frac{(1-k^2)\frac{\dimh(E)(1-k^2)}{(1+k\sqrt{1-\dimh(E)})^2}}{1+k^2-\frac{k^2\dimh(E)(1-k^2)}{(1+k\sqrt{1-\dimh(E)})^2}}.
\end{align}

Next, let us consider the estimate from above. Suppose first that $\dimh(E)\leq 1-k^2$. By combining Theorem \ref{upward} and \cite[Theorem 1.1]{PrSm}, we obtain
\begin{align}
\label{uestimate}
\dimh(f^*(E))&= \dimh(\widetilde{f}(h(E)))\leq \frac{(1+k^2)\dimh(h(E))}{1-k^2+k^2\dimh(h(E))}\nonumber\\
&\leq \frac{(1+k^2)\Delta^*(\dimh(E), k)}{1-k^2+k^2\Delta^*(\dimh(E), k))}\nonumber\\
&=\frac{(1+k^2)\dimh(E)}{1+k^2-2k\sqrt{1-\dimh(E)}},
\end{align}
where $\Delta^*(\dimh(E), k)=\Delta(\dimh(E), -\min\{k, \sqrt{1-\dimh(E)}\})$ as in the \cite[Theorem 1.1]{PrSm}. Note that assumption $\dimh(E)\leq 1-k^2$ means that 
\begin{align*}
\min\{k, \sqrt{1-\dimh(E)}\}=k
\end{align*}
and thus 
\begin{align*}
\Delta^*(\dimh(E), k)=\Delta(\dimh(E), -k)=\frac{\dimh(E)(1-k^2)}{(1-k\sqrt{1-\dimh(E)})^2}. 
\end{align*}

In the case $1-k^2<\dimh(E)\leq 1$, we can directly use the quasicircle bound of \cite{Smi} 
\begin{align}
\label{uestimate2}
\dimh(f^*(E))\leq 1+k^2. 
\end{align}

The estimates \eqref{destimate} and \eqref{uestimate} do not necessarily always give better estimates compared to Astala's original bounds \eqref{Astalasestimate}. However, they are better when the quasiconformal distortion $K$ is close to $1$. To make use of this fact, we introduce one more decomposition. Namely, an arbitrary $K$-quasiconformal map $f:{\C}\to {\C}$ can be decomposed as $f=f_1\circ f_2$, where $f_1:{\C}\to {\C}$ is $K_1$-quasiconformal and $f_2:{\C}\to {\C}$ is $K_2$-quasiconformal such that $K=K_1K_2$. 
This fact is well-known, see e.g. the proof of Theorem 5.6.1 in \cite{AIM}.
We will apply the estimates \eqref{destimate} and \eqref{uestimate} to $f_2$ and \eqref{Astalasestimate} to $f_1$. Our job is now to choose the value $K_2$ (depending on $K$ and $\dimh E$) suitably. In principle, one could optimise for this choice, but unfortunately the resulting expressions are overly complicated. Therefore we are content to present weaker versions below with concrete estimates in Theorems \ref{smallerthanone1} and \ref{smallerthanone2}.

We note that the estimates \eqref{Astalasestimate} have the property that they could be derived in two steps separately applying for $f_1$ and $f_2$ in the decomposition above. Since in our argument we use the same estimate for $f_1$ we only need to concentrate on the map $f_2$ and compare these two estimates.
For this reason we introduce a couple of auxiliary functions. We denote by $L=\dimh(E)$ and start with the estimate from below. We define the function $g_0$ as a difference between the two estimates appearing in \eqref{destimate} and \eqref{Astalasestimate} as applied for the map $f_2$
\begin{align*}
g_0(k_2, L):=\frac{(1-k_2^2)\frac{L(1-k_2^2)}{(1+k_2\sqrt{1-L)^2}}}{1+k_2^2-\frac{k_2^2L(1-k_2^2)}{(1+k_2\sqrt{1-L)^2}}}-\frac{1}{\frac{1+k_2}{1-k_2}(\frac{1}{L}-\frac{1}{2})+\frac{1}{2}}, \quad k_2 \in [0,k].
\end{align*}

Similarly, in the case of estimate from above, we define two more auxiliary functions as the relevant differences as follows
\begin{align*}
g_1(k_2, L):=\frac{1}{\frac{1-k_2}{1+k_2}(\frac{1}{L}-\frac{1}{2})+\frac{1}{2}}-\frac{(1+k_2^2)L}{1+k_2^2-2k_2\sqrt{1-L}}, \quad k_2 \in [0, \min\{k, \sqrt{1-L}\}],
\end{align*}
and
\begin{align*}
g_2(k_2, L):=\frac{1}{\frac{1-k_2}{1+k_2}(\frac{1}{L}-\frac{1}{2})+\frac{1}{2}}-(1+k_2^2), \quad k_2 \in (\min\{k, \sqrt{1-L}\}, k].
\end{align*}

The following theorem improves on the bound in \eqref{Astalasestimate} from below when $E \subset \R$. 
\begin{theorem}\label{smallerthanone1}
Let $E\subset {\R}$ such that $0<L:=\dimh(E)<1$, $k\geq 1.5\cdot 10^{-12}$ and $f:{\C} \to {\C}$ be a $K$-quasiconformal map. 
\begin{enumerate}
\item If $0<L\leq 0.635212...$, then $g_0(L^{60}, L)>0$ and
\begin{align*}
\frac{1}{K\left(\frac{1}{L}-\frac{1}{2}\right)+\frac{1}{2}}<\frac{1}{\frac{K}{\frac{1+L^{60}}{1-L^{60}}}\left(\frac{1}{\frac{(1-L^{120})\frac{L(1-L^{120})}{(1+L^{60}\sqrt{1-L})^2}}{1+L^{120}-\frac{L^{121}(1-L^{120})}{(1+L^{60}\sqrt{1-L})^2}}}-\frac{1}{2}\right)+\frac{1}{2}}\leq\dimh(f(E)).\\
\end{align*} 
\item If $0.635212...<L<1$, then $g_0((1-L)^{27}, L)>0$ and 
\begin{align*}
\frac{1}{K\left(\frac{1}{L}-\frac{1}{2}\right)+\frac{1}{2}}<\frac{1}{\frac{K}{\frac{1+(1-L)^{27}}{1-(1-L)^{27}}}\left(\frac{1}{\frac{(1-(1-L)^{54})\frac{L(1-(1-L)^{54})}{(1+(1-L)^{27}\sqrt{1-L})^2}}{1+(1-L)^{54}-\frac{(1-L)^{54}L(1-(1-L)^{54})}{(1+(1-L)^{27}\sqrt{1-L})^2}}}-\frac{1}{2}\right)+\frac{1}{2}}\leq\dimh(f(E)).\\
\end{align*}
\end{enumerate}
\end{theorem}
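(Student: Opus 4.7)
The plan is to exploit the multiplicative decomposition $f = f_1 \circ f_2$ introduced just above the theorem, with $f_2$ a $K_2$-quasiconformal map of Beltrami parameter $k_2 \in [0,k]$ to be chosen and $f_1 := f \circ f_2^{-1}$ the resulting $K_1$-quasiconformal map, $K_1 K_2 = K$. Since the improved bound \eqref{destimate} of Section~\ref{se:improved} requires the source to lie on the real line, the ``improvable'' map $f_2$ must act first on $E$, while $f_1$ acts on $f_2(E) \subset \C$ and only receives Astala's classical estimate \eqref{Astalasestimate}. The numerical hypothesis $k \geq 1.5\cdot 10^{-12}$ is calibrated exactly so that the prescribed choices $k_2 = L^{60}$ in part~(i) and $k_2 = (1-L)^{27}$ in part~(ii) always satisfy $k_2 \leq k$ on the corresponding range of $L$; at the crossover $L \approx 0.635212$, both formulas give $k_2 \approx 1.5\cdot 10^{-12}$.

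First I would apply \eqref{destimate} with parameter $k_2$ in place of $k$ to $f_2$, obtaining $\dimh(f_2(E)) \geq \phi(k_2,L)$ where $\phi(k_2,L)$ denotes the right-hand side of \eqref{destimate}. Then I would apply Astala's lower bound \eqref{Astalasestimate} to $f_1$ on the set $f_2(E)$; since $s \mapsto 1/\bigl(K_1(1/s - 1/2) + 1/2\bigr)$ is monotone increasing in $s$ and $K_1 = K(1-k_2)/(1+k_2)$, this yields
\begin{align*}
\dimh(f(E)) \;\geq\; \frac{1}{\frac{K(1-k_2)}{1+k_2}\left(\frac{1}{\phi(k_2,L)}-\frac{1}{2}\right)+\frac{1}{2}},
\end{align*}
which on substituting $k_2 = L^{60}$ or $k_2 = (1-L)^{27}$ reproduces the middle expression in the theorem.

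For the strict inequality against Astala's direct bound, I would invoke the self-composition identity: the M\"obius-type map $s \mapsto 1/\bigl(K'(1/s-1/2)+1/2\bigr)$ applied in turn with $K' = K_2$ and then $K' = K_1$ gives the same value as applying it once with $K' = K_1 K_2 = K$. Consequently Astala's direct bound for $f$ factors through the decomposition, and replacing the intermediate value ``Astala for $f_2$'' by the strictly larger $\phi(k_2, L)$ produces a strict improvement; this replacement is precisely the content of the inequality $g_0(k_2, L) > 0$.

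The main obstacle will be verifying $g_0(L^{60}, L) > 0$ on $L \in (0, 0.635212\ldots]$ and $g_0((1-L)^{27}, L) > 0$ on $L \in (0.635212\ldots, 1)$. A direct computation of the Taylor expansion at $k_2 = 0$ yields $\phi(0,L) = L$, matching Astala at $k_2=0$, and
\begin{align*}
g_0(k_2, L) \;=\; k_2 \cdot b^3(2-b) + O(k_2^2), \qquad b := 1 - \sqrt{1-L},
\end{align*}
so the linear coefficient is strictly positive on $L \in (0,1)$, while the crossover $L \approx 0.635212$ is the unique solution of $L^{60} = (1-L)^{27}$. The very conservative exponents $60$ and $27$ are intended to force $k_2$ small enough that the quadratic and higher remainder, whose coefficient is generically negative and degenerates as $L^3/4$ when $L \to 0$ and depends singularly on $\sqrt{1-L}$ when $L \to 1$, cannot overpower the linear contribution. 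Making this rigorous requires explicit uniform bounds on $|\partial_{k_2}^2 \phi|$ and on the second $k_2$-derivative of the Astala expression on each piece; the delicate part will be the near-endpoint behaviour $L \downarrow 0$ and $L \uparrow 1$, where I expect numerical or interval-arithmetic verification on a suitably fine grid of $L$, combined with a monotonicity argument filling in between grid points, to carry the day.
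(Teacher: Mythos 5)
Your proposal follows essentially the same route as the paper: decompose $f = f_1 \circ f_2$ with $K = K_1K_2$, apply the combined symmetric/antisymmetric bound \eqref{destimate} to $f_2$ (which acts first on $E\subset\R$), apply Astala's bound \eqref{Astalasestimate} to $f_1$, exploit the monotonicity and self-composition property of the fractional-linear transform $s\mapsto 1/(K'(1/s-1/2)+1/2)$, and reduce everything to the sign condition $g_0(k_2,L)>0$ with the choices $k_2=L^{60}$ resp.\ $(1-L)^{27}$, the threshold $0.635212\ldots$ arising as the solution of $L^{60}=(1-L)^{27}$, and $k\geq 1.5\cdot 10^{-12}$ ensuring $k_2\leq k$. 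Your Taylor computation $g_0(k_2,L) = k_2\,L(1-\sqrt{1-L})^2 + O(k_2^2) = k_2\,b^3(2-b)+O(k_2^2)$ with $b=1-\sqrt{1-L}$ is correct and adds analytical justification for why the positivity should hold for small $k_2$; the paper itself simply invokes the numerical facts $g_0(x^{60},x)>0$ on $(0,0.986\ldots)$ and $g_0((1-x)^{27},x)>0$ on $(0.179\ldots,1)$ without spelling out such an expansion, so in this respect your sketch is actually a bit more informative, though like the paper it ultimately defers the final verification to numerics.
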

\begin{proof}

Recall that a $K$-quasiconformal map $f:{\C}\to {\C}$ can be written as $f=f_1\circ f_2$, where $f_1:{\C}\to {\C}$ is $K_1$-quasiconformal and $f_2:{\C}\to {\C}$ is $K_2$-quasiconformal such that $K=K_1K_2$. We will choose $K_2=\frac{1+k_2}{1-k_2}$ such that $g_0(k_2, L)>0$. Note that then $K_1=\frac{K}{K_2}$ and
\begin{align*}
k_1=\frac{K_1-1}{K_1+1}=\frac{\frac{K}{K_2}-1}{\frac{K}{K_2}+1}=\frac{\frac{K}{\frac{1+k_2}{1-k_2}}-1}{\frac{K}{\frac{1+k_2}{1-k_2}}+1}.
\end{align*}
Since $E\subset {\R}$, we can use estimate \eqref{destimate} to the map $f_2$. First, we use right hand estimate of \eqref{Astalasestimate} for map $f_1$ and after that we use estimate \eqref{destimate} to map $f_2$. 
Note that after using right hand estimate of \eqref{Astalasestimate} for function $f_1$, we have strictly increasing function on interval $(0, 2]$ of the form
\begin{align*}
x\mapsto \frac{1}{\frac{1+k_1}{1-k_1}(\frac{1}{x}-\frac{1}{2})+\frac{1}{2}}.
\end{align*}
Therefore, it is essential to give a better estimate for $\dimh (f_2(E))$ from below than what right hand estimate of \eqref{Astalasestimate} gives. If $0<L\leq 0.635212...$, then we choose $k_2=L^{60}<1.5\cdot 10^{-12}\leq k$ and obtain $g_0(L^{60}, L)>0$. This follows from the numerical estimate
$g_0(x^{60}, x)>0$ valid in the range $0<x<0.986...$.
Hence
\begin{align}
\label{alas1}
& \frac{1}{\frac{1+k}{1-k}\left(\frac{1}{\dimh(E)}-\frac{1}{2}\right)+\frac{1}{2}}<\left( {\frac{1+k_1}{1-k_1}\left(\frac{1}{\frac{(1-k_2^2)\frac{\dimh(E)(1-k_2^2)}{(1+k_2\sqrt{1-\dimh(E)})^2}}{1+k_2^2-\frac{k_2^2\dimh(E)(1-k_2^2)}{(1+k_2\sqrt{1-\dimh(E)})^2}}}-\frac{1}{2}\right)+\frac{1}{2}} \right)^{-1}\\
&\leq\frac{1}{\frac{1+k_1}{1-k_1}\left(\frac{1}{\dimh(f_2(E))}-\frac{1}{2}\right)+\frac{1}{2}}\nonumber
\leq\dimh(f_1(f_2(E))=\dimh(f(E)) \nonumber.
\end{align}
Note that left-hand side presentation of \eqref{alas1} is Astala's original estimate for function $f$.
The strict inequality follows from the fact that $g_0(k_2, L)=g_0(L^{60}, L)>0$. 
The claim (1) now follows by substituting $k_2=L^{60}$ and $\frac{1+k_1}{1-k_1}=K_1=\frac{K}{K_2}=\frac{K}{\frac{1+L^{60}}{1-L^{60}}}$ in \eqref{alas1}.\\

If $0.635212...<L<1$, then we choose $k_2=(1-L)^{27}<1.5\cdot 10^{-12}\leq k$ and obtain $g_0((1-L)^{27}, L)>0$. Here we use the numerical estimate $g_0((1-x)^{27}, x)>0$ valid for $ 0.179...<x<1$. Note that the value $x_0=0.635212\ldots$ is the unique positive solution of the equation $x_0^{60}=(1-x_0)^{27}$. 
Hence we can write
\begin{align}
\label{alas2}
& \frac{1}{\frac{1+k}{1-k}\left(\frac{1}{\dimh(E)}-\frac{1}{2}\right)+\frac{1}{2}}<\left( {\frac{1+k_1}{1-k_1}\left(\frac{1}{\frac{(1-k_2^2)\frac{\dimh(E)(1-k_2^2)}{(1+k_2\sqrt{1-\dimh(E)})^2}}{1+k_2^2-\frac{k_2^2\dimh(E)(1-k_2^2)}{(1+k_2\sqrt{1-\dimh(E)})^2}}}-\frac{1}{2}\right)+\frac{1}{2}} \right)^{-1} \\
&\leq\frac{1}{\frac{1+k_1}{1-k_1}\left(\frac{1}{\dimh(f_2(E))}-\frac{1}{2}\right)+\frac{1}{2}}\nonumber 
\leq\dimh(f_1(f_2(E))=\dimh(f(E))\nonumber,
\end{align}
where the left-hand side inequality is a consequence of $g_0(k_2, L)=g_0((1-L)^{27}, L)>0$. Finally, the claim (2) follows by substituting $k_2=(1-L)^{27}$ and 
\[ \frac{1+k_1}{1-k_1}=K_1=\frac{K}{K_2}=\frac{K}{\frac{1+(1-L)^{27}}{1-(1-L)^{27}}}\]
in \eqref{alas2}.
\end{proof}

We now turn to estimates from above. Our result reads as the following in this setting.
\begin{theorem}\label{smallerthanone2}
Let $E\subset {\R}$ such that $0<L:=\dimh(E)<1$, $k\geq 2.67\cdot 10^{-21}$ and $f:{\C}\to {\C}$ be a $K$-quasiconformal map. 
\begin{enumerate}
\item If $0<L\leq 0.6197...$, then $g_1(L^{99}, L)>0$ and
\begin{align*}
\dimh(f(E))\leq \frac{1}{\frac{\frac{1+L^{99}}{1-L^{99}}}{K}\left(\frac{1}{ \frac{(1+L^{198})L}{1+L^{198}-2 L^{99}\sqrt{1-L}}}-\frac{1}{2}\right)+\frac{1}{2}}<\frac{1}{\frac{1}{K}\left(\frac{1}{L}-\frac{1}{2}\right)+\frac{1}{2}}.
\end{align*}
\item If $0.6197...<L\leq 1-(2.67)^2 \cdot 10^{-42}$, then $g_1((1-L)^{49}, L)>0$ and
\begin{align*}
\dimh(f(E))\leq \frac{1}{\frac{\frac{1+(1-L)^{49}}{1-(1-L)^{49}}}{K}\left(\frac{1}{ \frac{(1+(1-L)^{98})L}{1+(1-L)^{98}-2(1-L)^{49}\sqrt{1-L}}}-\frac{1}{2}\right)+\frac{1}{2}}<\frac{1}{\frac{1}{K}\left(\frac{1}{L}-\frac{1}{2}\right)+\frac{1}{2}}.
\end{align*}
\item If $1-(2.67)^2 \cdot 10^{-42}<L<1$, then $g_2(2.67\cdot 10^{-21}, L)>0$ and
\begin{align*}
\dimh(f(E))\leq \frac{1}{\frac{1-k_1}{1+k_1}\left(\frac{1}{1+k_2^2}-\frac{1}{2}\right)+\frac{1}{2}}<\frac{1}{\frac{1-k}{1+k}\left(\frac{1}{L}-\frac{1}{2}\right)+\frac{1}{2}},
\end{align*}
where $k_2=2.67\cdot 10^{-21}$ and $k_1=\frac{\frac{K}{\frac{1+k_2}{1-k_2}}-1}{\frac{K}{\frac{1+k_2}{1-k_2}}+1}$.
\end{enumerate}
\end{theorem}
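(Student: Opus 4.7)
The plan is to mirror the structure of Theorem \ref{smallerthanone1} but now with upper bounds, by combining the refined estimates \eqref{uestimate} and \eqref{uestimate2} with Astala's bound \eqref{Astalasestimate}. I would write $f = f_1 \circ f_2$ with $f_2$ being $K_2$-quasiconformal, $f_1$ being $K_1$-quasiconformal, and $K = K_1 K_2$, where $k_1, k_2$ are the corresponding dilatation constants. The idea is to apply the improved upper bound to $f_2$ acting on $E \subset \R$, then chain with Astala's upper bound \eqref{Astalasestimate} applied to $f_1$ on the subset $f_2(E) \subset \C$. Because Astala's upper bound is monotone increasing in the dimension of the input, any improvement on $\dimh(f_2(E))$ immediately upgrades the bound on $\dimh(f(E))$.

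For each of the three cases I would check three items. (a) That $k_2 \leq k$, so that the decomposition is valid; the hypothesis $k \geq 2.67 \cdot 10^{-21}$ together with the crossover point $L \approx 0.6197\ldots$, the unique positive solution of $L^{99} = (1-L)^{49}$, ensures $L^{99}, (1-L)^{49} \leq 2.67 \cdot 10^{-21} \leq k$ on their respective ranges. (b) That the correct refined estimate applies to $f_2$: in cases (1) and (2) the condition $L \leq 1 - k_2^2$ reduces respectively to $L \leq 1 - L^{198}$ (trivial) and $(1-L)^{98} \leq 1-L$ (obvious for $1-L \in (0,1)$), so \eqref{uestimate} yields $\dimh(f_2(E)) \leq (1+k_2^2)L / (1+k_2^2 - 2k_2\sqrt{1-L})$; in case (3) we have $L > 1 - k_2^2 = 1 - (2.67)^2 \cdot 10^{-42}$ and use instead the quasicircle bound \eqref{uestimate2} to get $\dimh(f_2(E)) \leq 1+k_2^2$. (c) The strict improvement over Astala's direct bound for $f$, which is equivalent, by monotonicity of Astala's bound in the dimension variable, to positivity of the auxiliary function $g_1$ in cases (1)-(2) and of $g_2$ in case (3) at the chosen $k_2$. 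Substituting the stated $k_2$ and $k_1 = (K/K_2 - 1)/(K/K_2 + 1)$ into Astala's bound for $f_1$ then produces the displayed inequality in each case.

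The main obstacle is the verification of the positivity claims $g_1(L^{99}, L) > 0$, $g_1((1-L)^{49}, L) > 0$, and $g_2(2.67 \cdot 10^{-21}, L) > 0$ on the respective intervals for $L$. These are one-variable inequalities whose analytic handling is somewhat delicate because of the extreme exponents. A reasonable approach is to clear denominators and examine the sign of the resulting polynomial-type expression, exploiting monotonicity in $L$ and testing at a few sample points, or to verify the inequalities numerically to the required precision. The specific exponents $99$ and $49$ and the cutoff $2.67 \cdot 10^{-21}$ are tuned so that the inequalities hold with margin on the specified intervals and match at the crossover; as the authors note just before the statement, optimising these choices would yield cleaner numerical bounds but considerably more intricate algebra.
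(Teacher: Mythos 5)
Your proposal matches the paper's proof in all essential respects: the same decomposition $f = f_1 \circ f_2$ with $K = K_1K_2$, the application of \eqref{uestimate} (resp. \eqref{uestimate2}) to $f_2$ and Astala's upper bound to $f_1$, the monotonicity argument to propagate the improvement, the identification of $L \approx 0.6197$ as the crossover where $L^{99} = (1-L)^{49}$, and the deferral of the positivity of $g_1$ and $g_2$ to numerical verification. The only minor imprecision is calling $L \leq 1 - L^{198}$ "trivial" — it fails near $L=1$, but holds with enormous margin on the relevant range $L \leq 0.6197\ldots$ since there $L^{198}$ is of order $10^{-42}$, so the conclusion is unaffected.
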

\begin{proof}
We start in the same way as in the proof of Theorem \ref{smallerthanone1}. A $K$-quasiconformal map $f:{\C}\to {\C}$ can be decomposed as $f=f_1\circ f_2$, where $f_1:{\C}\to {\C}$ is $K_1$-quasiconformal and $f_2:{\C}\to {\C}$ is $K_2$-quasiconformal such that $K=K_1K_2$. In case (1) we will choose $K_2=\frac{1+k_2}{1-k_2}$ such that $g_1(k_2, L)>0$ . In case (2) we proceed similarly to that in case (1). In case (3) we will choose $K_2=\frac{1+k_2}{1-k_2}$ such that $g_2(k_2, L)>0$. Note that after fixing $K_2$, we have $K_1=\frac{K}{K_2}$ and
\begin{align*}
k_1=\frac{K_1-1}{K_1+1}=\frac{\frac{K}{K_2}-1}{\frac{K}{K_2}+1}=\frac{\frac{K}{\frac{1+k_2}{1-k_2}}-1}{\frac{K}{\frac{1+k_2}{1-k_2}}+1}.
\end{align*}
Since $E\subset {\R}$, we can use estimates \eqref{uestimate} and \eqref{uestimate2} to the map $f_2$. First, we use left-hand estimate of \eqref{Astalasestimate} to map $f_1$ and after that we use estimates \eqref{uestimate} and \eqref{uestimate2} to map $f_2$. 
Note that after using left-hand estimate of \eqref{Astalasestimate} for map $f_1$, we have strictly increasing function on interval $(0, 2]$ of the form
\begin{align*}
x\mapsto \frac{1}{\frac{1-k_1}{1+k_1}(\frac{1}{x}-\frac{1}{2})+\frac{1}{2}}.
\end{align*}

This time, we will have to give better estimate for $\dimh (f_2(E))$ from above than the left-hand estimate of \eqref{Astalasestimate}.
Note that, if $0<L\leq 1-(2.67)^2\cdot 10^{-42}$, then $\min \{k, \sqrt{1-L}\}\geq 2.67\cdot 10^{-21}$ and we can consider function $g_1$ on interval $[0, 2.67\cdot 10^{-21}]$ . If $0<L\leq 0.6197...$, then we choose $k_2=L^{99}<2.67\cdot 10^{-21}\leq k$ and obtain $g_1(L^{99}, L)>0$. 
For this we use the numerical estimate $g_1(x^{99}, x)>0$ valid for every $0<x\leq 1$.
Hence
\begin{align}
\label{ylos1}
& \dimh(f(E))=\dimh(f_1(f_2(E))\leq \frac{1}{\frac{1-k_1}{1+k_1}\left(\frac{1}{\dimh(f_2(E))}-\frac{1}{2}\right)+\frac{1}{2}} \\
&\leq \frac{1}{\frac{1-k_1}{1+k_1}\left(\frac{1}{\frac{(1+k_2^2)\dimh(E)}{1+k_2^2-2k_2\sqrt{1-\dimh(E)}}}-\frac{1}{2}\right)+\frac{1}{2}}
<\frac{1}{\frac{1-k}{1+k}\left(\frac{1}{\dimh(E)}-\frac{1}{2}\right)+\frac{1}{2}}\nonumber,
\end{align}
where last inequality follows from fact that $g_1(k_2, L)=g_1(L^{99}, L)>0$. 
Again, note that right-hand side presentation of \eqref{ylos1}) is Astala's original estimate for the map $f$.
The claim (1) follows by substituting $k_2=L^{99}$ and $\frac{1-k_1}{1+k_1}=\frac{K_2}{K}=\frac{\frac{1+L^{99}}{1-L^{99}}}{K}$ in \eqref{ylos1}.\\

If $0.6197...<L\leq 1-(2.67)^2\cdot 10^{-42}$, then we choose $k_2=(1-L)^{49}<2.67\cdot 10^{-21}\leq k$ and obtain $g_1((1-L)^{49}, L)>0$. For this we use the numerical estimate
$g_1((1-x)^{49}, x)>0$ valid for $0.119...<x< 1$. The role of the value $x_0=0.6197...$ this time is that it is the unique positive solution of the equation $x_0^{99}=(1-x_0)^{49}$. Hence we have
\begin{align}
\label{ylos2}
& \dimh(f(E))=\dimh(f_1(f_2(E))\leq \frac{1}{\frac{1-k_1}{1+k_1}\left(\frac{1}{\dimh(f_2(E))}-\frac{1}{2}\right)+\frac{1}{2}} \\
&\leq \frac{1}{\frac{1-k_1}{1+k_1}\left(\frac{1}{\frac{(1+k_2^2)\dimh(E)}{1+k_2^2-2k_2\sqrt{1-\dimh(E)}}}-\frac{1}{2}\right)+\frac{1}{2}}
<\frac{1}{\frac{1-k}{1+k}\left(\frac{1}{\dimh(E)}-\frac{1}{2}\right)+\frac{1}{2}}\nonumber,
\end{align}
where last inequality follows from fact that $g_1(k_2, L)=g_1((1-L)^{49}, L)>0$. The claim (2) follows by substituting $k_2=(1-L)^{49}$ and $\frac{1-k_1}{1+k_1}=\frac{K_2}{K}=\frac{\frac{1+(1-L)^{49}}{1-(1-L)^{49}}}{K}$ in \eqref{ylos2}.\\

If $1-(2.67)^2\cdot 10^{-42}<L<1$, then $\min \{k, \sqrt{1-L}\}=\sqrt{1-L}<2.67\cdot  10^{-21}$. Thus we have to consider function $g_2$ at the value $2.67\cdot 10^{-21}$. We choose $k_2=2.67\cdot 10^{-21}$ and obtain
\begin{align*}
g_2(2.67\cdot 10^{-21}, L)\geq g_2(2.67\cdot 10^{-21}, 1-10^{-40})\approx 2.67 \cdot 10^{-21}>0.
\end{align*}
In this case, we can estimate as follows
\begin{align*}
\dimh(f(E))&=\dimh(f_1(f_2(E)) \\
\leq \frac{1}{\frac{1-k_1}{1+k_1}\left(\frac{1}{\dimh(f_2(E))}-\frac{1}{2}\right)+\frac{1}{2}} &\leq \frac{1}{\frac{1-k_1}{1+k_1}\left(\frac{1}{1+k_2^2}-\frac{1}{2}\right)+\frac{1}{2}}<\frac{1}{\frac{1-k}{1+k}\left(\frac{1}{\dimh(E)}-\frac{1}{2}\right)+\frac{1}{2}},
\end{align*}
where $k_2=2.67\cdot 10^{-21}$. This proves claim (3).
\end{proof}

\end{document}